\newcommand{\Ex}[1][C]{\mathrm{Exp}_{\mathcal{#1}}}
\newcommand{\bbbn}{\mathbb{N}}
\newcommand{\bbbr}{\mathbb{R}}
 \newcommand{\rdens}[1]{\nabla_{{#1}}}
\newcommand{\shm}{\,{\triangledown}\,}
\theoremstyle{plain}
\newtheorem{theorem}{Theorem}
\newtheorem{lemma}{Lemma}
\newtheorem{corollary}{Corollary}
\theoremstyle{definition}
\newtheorem{definition}{Definition}
\theoremstyle{remark}
\newtheorem{problem}{Problem}
\title[circular chromatic number of large girth graphs]{A note on circular chromatic number of graphs with large girth and similar problems}
\thanks{Supported by grant ERCCZ LL-1201 
and CE-ITI,  by the European Associated Laboratory ``Structures in
Combinatorics'' (LEA STRUCO) P202/12/6061, 
and partially supported by ANR project Stint under reference ANR-13-BS02-0007}
\author{Jaroslav Ne{\v s}et{\v r}il}
\address{Jaroslav Ne{\v s}et{\v r}il\\
Computer Science Institute of Charles University (IUUK and ITI)\\
   Malostransk\' e n\' am.25, 11800 Praha 1, Czech Republic}
\email{nesetril@kam.ms.mff.cuni.cz}
\author{Patrice~Ossona~de~Mendez}
\address{Patrice~Ossona~de~Mendez\\
Centre d'Analyse et de Math\'ematiques Sociales (CNRS, UMR 8557)\\
  190-198 avenue de France, 75013 Paris, France
  and
     Computer Science Institute of Charles University (IUUK)\\
   Malostransk\' e n\' am.25, 11800 Praha 1, Czech Republic}
 \email{pom@ehess.fr}
 \date{\today}
 \keywords{}
\subjclass[2010]{05C99 (Graph theory)}
\begin{document}
 \begin{abstract}
 In this short note, we extend the result of Galluccio, Goddyn, and Hell, which states
that graphs of large girth excluding a minor are nearly bipartite.
We also prove a similar result for the oriented chromatic number, from which follows in particular
that graphs of large girth excluding a minor have oriented chromatic number at most $5$, and for the
$p$th chromatic number $\chi_p$, from which follows in particular
that graphs $G$ of large girth excluding a minor have $\chi_p(G)\leq p+2$.
 \end{abstract}
 \maketitle
 \section{Introduction}
The circular chromatic number $\chi_c(G)$ of a graph $G$, which is
a refinement of its chromatic number, has received much attention recently
(see~\cite{raey} for a survey on recent developments).

Recall that the {\em circular chromatic number}
$\chi _{c}(G)$ of a graph $G$ is the infimum of rational numbers ${\frac  {n}{k}}$ 
such that there is a mapping from  the vertex set of $G$ to ${\mathbb  Z}_n$
with the property that for adjacent vertices are mapped to elements at distance $\geq k$.

 In general, graphs of large girth can have arbitrary given 
 circular chromatic number:
 \begin{theorem}[Ne\v{s}et\v{r}il and Zhu~\cite{Nevsetvril2001}]
 For any rational $r\geq 2$ and any positive integers $t,l$, there is a graph
 $G$ of girth at least $l$ such that $G$ has exactly $t$ $r$-colorings, up to
 equivalence. In particular, for any $r\geq 2$ and for any integer $l$, there is 
 a graph $G$ of girth at least $l$ which is uniquely $r$-colorable and hence it
 has $\chi_c(G)=r$.
\end{theorem}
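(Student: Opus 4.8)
The plan is to translate the statement into the language of homomorphisms to circular cliques, reduce it to producing a single graph with three prescribed features, and then obtain that graph by a sparse (large-girth) construction. Write $r=n/k$ in lowest terms with $n\ge 2k$, and let $H=K_{n/k}$ denote the circular clique: $V(H)=\mathbb Z_n$, and $i\sim j$ exactly when the circular distance between $i$ and $j$ is at least $k$. I will use the standard facts that $\chi_c(G)\le n/k$ if and only if there is a homomorphism $G\to H$; that the infimum defining $\chi_c$ is attained on finite graphs; that for $n>2k$ the graph $H$ is a core; and that then $\Gamma:=\mathrm{Aut}(H)$ is a known finite group (the symmetric group $S_n$ if $k=1$, the dihedral group $D_n$ if $k\ge 2$). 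With these facts, an $r$-coloring of $G$ up to equivalence is precisely an element of $\mathrm{Hom}(G,H)/\Gamma$, so the theorem becomes: for all integers $t,l\ge 1$ there is a graph $G$ with girth at least $l$, with $\chi_c(G)=r$, and with $|\mathrm{Hom}(G,H)/\Gamma|=t$. (The case $r=2$, i.e.\ $H=K_2$, is degenerate, and I would dispose of it separately.)

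The first, easy step is to build a \emph{finite} graph $K$, whose girth I do not control, with $\chi_c(K)=r$ and with exactly $t$ $r$-colorings up to equivalence. For $t=1$ one may simply take $K=H$: since $H$ is a core, $\mathrm{End}(H)=\mathrm{Aut}(H)$, so $H$ is uniquely $r$-colorable, and $\chi_c(H)=r$. For larger $t$ one attaches to $H$ a bounded number of ``choice'' gadgets --- pendant or path-like attachments, each offering a controlled number of $r$-coloring extensions --- wired together so that exactly $t$ global $r$-colorings survive up to the action of $\Gamma$; this is a routine, if slightly fussy, finite verification. (For instance, $H$ with a single pendant vertex already has $n-2k+1$ colorings up to equivalence, and one tunes from there.)

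The substance of the proof is to raise the girth of this base graph to at least $l$ without disturbing either $\chi_c$ or the number of $r$-colorings up to equivalence. For this I would run the Ne{\v s}et{\v r}il--R\"odl partite (sparse amalgamation) construction inside the category of $H$-colored graphs: $G$ is produced by a finite sequence of amalgamations, starting from $K$ and gluing copies over rigid subgraphs, the girth being pushed up one unit per stage by a Ramsey-type argument together with the deletion of the short cycles that each amalgamation creates. Two invariants must be maintained throughout. First, ``no new colorings'': every $H$-coloring of the current graph must, up to $\Gamma$, be induced by the amalgamation from $H$-colorings of the pieces --- this is what the Ramsey/rigidity side of the construction buys, and it yields $|\mathrm{Hom}(G,H)/\Gamma|=t$ once one notes in addition that $G$ maps onto $K$, so that the $t$ colorings of $K$ remain distinct in $G$. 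Second, ``$\chi_c$ does not drop'': one must keep $G\not\to K_{n'/k'}$ for every $n'/k'<r$; since $G$ has large girth it cannot contain a copy of $K$ (which has short cycles) as a subgraph, so this lower bound is not inherited for free --- it is the circular analogue of Erd\H{o}s's girth--chromatic-number theorem, and it has to be engineered into the construction, either by also tracking these ``forbidden'' targets through the amalgamations, or by interleaving a high-girth graph of circular chromatic number exactly $r$ while keeping the amalgamation rigid enough not to add colorings. Specializing the result to $t=1$ gives a uniquely $r$-colorable graph of girth at least $l$, which then has $\chi_c=r$, and that is the ``in particular'' clause.

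I expect the main obstacle to be exactly this last pair of requirements at large girth. A graph of large girth is locally a tree, and trees map freely to $H$; hence no local configuration can witness that a candidate $H$-coloring fails, nor that the circular chromatic number is not smaller. Every such witness must be propagated globally, and the delicate part of the partite construction is to orchestrate the amalgamations so that each girth-increasing step simultaneously creates no extra $r$-coloring and leaves $G$ unable to map to any smaller circular clique. By comparison, the finite gadget $K$, the identification of the $r$-colorings of $G$ with those of $K$, and the $r=2$ case are all routine.
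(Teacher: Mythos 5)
Your proposal is a plan rather than a proof: the two invariants you yourself identify as the substance of the theorem --- that each girth-raising amalgamation step creates no new $H$-coloring modulo $\mathrm{Aut}(H)$, and that $G$ still admits no homomorphism to any circular clique $K_{n'/k'}$ with $n'/k'<r$ --- are exactly the content of the Ne\v{s}et\v{r}il--Zhu theorem, and you do not establish either of them. Saying that the Ramsey/rigidity side of the partite construction ``buys'' the first invariant, and that the second ``has to be engineered into the construction, either by also tracking these forbidden targets through the amalgamations, or by interleaving a high-girth graph of circular chromatic number exactly $r$,'' is precisely where a proof would have to begin; as written, no lemma is stated, let alone proved, that controls $\mathrm{Hom}(G,H)$ after an amalgamation step, and no mechanism is given that certifies the lower bound $\chi_c(G)\geq r$ for the final sparse graph (which, as you note, cannot be witnessed locally, since large-girth graphs are locally trees and trees map to $H$ freely). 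Moreover, the specific device you mention --- ``deletion of the short cycles that each amalgamation creates'' --- would break both invariants: removing edges or vertices can only enlarge the set of $H$-colorings and can only decrease $\chi_c$, so short cycles must be avoided by the combinatorial structure of the amalgamation (this is what the partite construction is designed to do), not destroyed afterwards.

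There are also smaller soft spots: the base graph with exactly $t$ colorings up to equivalence is dismissed as ``routine,'' but even your sample computation is off (a pendant vertex attached to a fixed vertex of $K_{n/k}$ has $n-2k+1$ admissible colors, but the reflection in $\mathrm{Aut}(K_{n/k})$ fixing that vertex identifies colors $j$ and $-j$, so the count of colorings up to equivalence is smaller), and ensuring that such gadgets neither raise nor lower $\chi_c$ while hitting the exact value $t$ needs an actual argument. In fairness, your overall strategy --- reformulate $r$-colorings as homomorphisms to the circular clique, build a finite template with the prescribed count, and sparsify by an amalgamation/partite-type construction that is rigid with respect to $H$-colorings --- is in the spirit of how such results are actually proved; but the theorem lives entirely in the steps you have deferred, so as it stands the proposal has a genuine gap rather than a complete argument.
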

 
 However, if restricted to special classes of graphs, large girth graphs may be 
 forced to have small circular chromatic number. For instance:
 
  \begin{theorem}[Galluccio, Goddyn, and Hell~\cite{Galluccio2001}]
  \label{thm:ggh}
  For any integer $n\geq 4$, for any $\epsilon>0$, there is an integer $g$ such that every 
  $K_n$-minor free graph $G$ with girth at least $g$ has
 $\chi_c(G)\leq 2+\epsilon$.
 \end{theorem}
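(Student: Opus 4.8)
The plan is to reduce the statement to the existence of a homomorphism into a long odd cycle and then exploit the sparsity forced by excluding a minor. Fix $\epsilon>0$ and an integer $k\ge 1/\epsilon$. Since the circular chromatic number does not increase under graph homomorphisms and $\chi_c(C_{2k+1})=(2k+1)/k=2+1/k$, it is enough to produce $g=g(n,k)$ such that every $K_n$-minor-free graph of girth at least $g$ admits a homomorphism to the odd cycle $C_{2k+1}$. I would prove this by induction on $|V(G)|$, taking $G$ to be a vertex-minimal counterexample, which we may assume connected and with at least one edge. The only ``local'' ingredient is the elementary fact that in $C_{2k+1}$ there is a walk of any prescribed length $m\ge 2k$ between any two (possibly equal) vertices: one traverses the short or the long arc between them to get the right parity, then pads with back-and-forth steps.

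First, low-degree configurations are reducible. If $G$ has a vertex $v$ of degree $\le 1$, then $G-v$ is smaller, still $K_n$-minor-free and of girth $\ge g$, hence maps to $C_{2k+1}$, and the map extends to $v$; contradiction. If $G$ contains a path $v_0v_1\cdots v_m$ with $m\ge 2k$ whose internal vertices all have degree $2$, delete $v_1,\dots,v_{m-1}$: the resulting smaller graph maps to $C_{2k+1}$, and by the walk lemma we may recolour $v_1,\dots,v_{m-1}$ to match the images of $v_0$ and $v_m$; contradiction. So in $G$ every vertex has degree $\ge 2$ and every maximal path of degree-$2$ vertices has at most $2k-1$ edges.

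If $G$ has no vertex of degree $\ge 3$ it is a disjoint union of cycles of length $\ge g\ge 2k+1$, each of which maps to $C_{2k+1}$ (even cycles onto an edge); contradiction. Otherwise let $H$ be obtained from $G$ by suppressing all degree-$2$ vertices. Then $G$ is a subdivision of $H$, so $H$ is a minor of $G$ and hence $K_n$-minor-free; $H$ has minimum degree $\ge 3$; and since each edge of $H$ is a path of at most $2k-1$ edges of $G$, a cycle of $H$ of length $c$ gives a cycle of $G$ of length $\ge g$ spanned by $c$ such paths, so $c\ge g/(2k-1)$ (taking $g$ large enough also makes $H$ simple). Thus $H$ is a simple graph of minimum degree $\ge 3$ and girth at least $g/(2k-1)$. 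Here I invoke the known fact that minimum degree $3$ together with sufficiently large girth forces an arbitrarily large clique minor: for every $n$ there is $g^*(n)$ such that every graph of minimum degree $\ge 3$ and girth $\ge g^*(n)$ contains a $K_n$-minor. Choosing $g\ge (2k-1)\,g^*(n)$ (and $g\ge 4k$) makes $H$, hence $G$, contain a $K_n$-minor --- a contradiction. So no counterexample exists.

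The single substantial ingredient, and the step I expect to be the main obstacle, is this minor-forcing theorem for graphs of minimum degree $3$ and large girth; everything else is bookkeeping around the reduction to odd-cycle homomorphisms and the walk-extension lemma. One clean way to obtain that ingredient is through tree-width: minimum degree $\ge 3$ together with bounded tree-width bounds the girth, while large tree-width produces a large grid minor and therefore a large clique minor.
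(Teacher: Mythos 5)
Your reduction is essentially the argument of Galluccio, Goddyn and Hell that this paper follows (compare Lemma~\ref{lem:2} and Lemma~\ref{lem:ggh}): pass to a minimal counterexample, eliminate vertices of degree at most one and paths of at least $2k$ edges whose internal vertices have degree two (this is exactly $2k$-path-degeneracy together with the walk-extension lemma), suppress the remaining degree-two vertices to obtain a simple topological core $H$ of minimum degree at least $3$, still $K_n$-minor free and of girth at least $g/(2k-1)$, and derive a contradiction from a theorem forcing minors in graphs of minimum degree $3$ and large girth. The black box you invoke --- for every $n$ there is $g^*(n)$ such that minimum degree $\geq 3$ and girth at least $g^*(n)$ force a $K_n$-minor --- is indeed a known theorem (Thomassen; with explicit bounds by Mader and by K\"uhn and Osthus), so with that citation your proof is complete and runs parallel to the paper, which instead black-boxes Theorem~\ref{thm:dr} of Diestel and Rempel and combines it with the bounded density of minors of $K_n$-minor-free graphs.

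The one genuine flaw is the justification you sketch for that key ingredient. Large tree-width does give a large grid minor, but a grid is planar and hence has no $K_5$-minor, so grid minors can never yield large clique minors; more generally, planar graphs show that no implication ``large tree-width $\Rightarrow$ large clique minor'' can hold. Moreover, the first half of your sketch (minimum degree $\geq 3$ plus bounded tree-width bounds the girth) is, in its standard proofs, itself a consequence of the dense-minor theorems you are trying to establish, so the sketch is circular as well as incorrect. The clean route is the one used in Lemma~\ref{lem:2}: by Theorem~\ref{thm:dr}, girth greater than $6d+3$ and $\delta\geq 3$ give a shallow minor of density at least $2^d$, while every minor of a $K_n$-minor-free graph has density $O(n\sqrt{\log n})$ (Kostochka, Thomason); choosing $d$ so that $2^d$ exceeds this bound yields the contradiction, and hence exactly the function $g^*(n)$ your argument needs.
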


The aim of this paper is to extend Theorem~\ref{thm:ggh} to other classes, all with a bounded
expansion.
Classes with bounded expansion have been introduced in
\cite{Taxi_stoc06,POMNI}, and are based on the requirement for the graph
invariant $\rdens{r}(G)$ to be bounded in the class for each $r$.

Denote by $V(G)$ and $E(G)$ the vertex set and the edge set of
$G$. Also denote by $|G|=|V(G)|$ (resp. $\|G\|=|E(G)|$) the {\em
order} of $G$ (resp.  {\em size}).
Let $G,H$ be graphs with $V(H)=\{v_1,\dots,v_h\}$ and let $r$ be an integer. 
A graph $H$ is a {\em shallow minor} of a graph $G$ {\em at depth} $r$, if
there exists disjoint subsets $A_1,\dots,A_h$ of $V(G)$ such that
\begin{itemize}
  \item the subgraph of $G$ induced by $A_i$ is connected and as radius at most
  $r$,
  \item if $v_i$ is adjacent to $v_j$ in $H$, then some vertex in $A_i$ is
  adjacent in $G$ to some vertex in $A_j$.
\end{itemize}
We denote \cite{POMNI, Sparsity} by $G\shm r$  the class of the (simple) graphs
which are shallow minors of $G$ at depth $r$,
and we denote by $\rdens{r}(G)$ the maximum density of a graph in
$G\shm r$, that is:
$$\rdens{r}(G)=\max_{H\in G\shm r}\frac{\|H\|}{|H|}$$
The {\em expansion} of a class $\mathcal C$ is the function $\Ex:\bbbn\rightarrow\bbbr\cup\{\infty\}$ defined by
$$\Ex(r)=\sup_{G\in\mathcal{G}}\rdens{r}(G).$$
A class $\mathcal C$ has {\em bounded expansion} if $\Ex(r)<\infty$  for each value of $r$.

For instance, the class $\mathcal D$ of all graphs with maximum degree $3$ has $\Ex[D](r)=(3/2)2^r$, while
a class $\mathcal{C}$ has uniformly bounded $\Ex$  (that is: $\Ex(r)\leq C$ for some constant $C$, independently of $r$) if and only if there is a graph $F$ that is
a minor of no graph in  $\mathcal C$. 
The expansion function $\Ex$ is indeed non-decreasing and no other general constraint exists on the grow rate of expansion functions. In particular,
for every non-decreasing function $f:\bbbn\rightarrow\bbbn$ with $f(0)=2$ there exists a class $\mathcal{C}$
with $\Ex(r)=f(r)$ (see \cite{Sparsity}, Exercice 5.1).

However, when studying properties of a class, it is sometimes the case that the frontier between classes that verify the property and those that do not
can be (at least approximately) expressed by means of a threshold expansion function (see for instance~\cite{Dvovrak2009}, where it is proved
that every class $\mathcal{C}$ with  expansion $\Ex(r)\leq c^{r^{1/3-\epsilon}}$ is small, but that some non-small class $\mathcal C$ exists, for which
$\Ex(r)\leq 6\cdot 3^{\sqrt{r \log(r+e)}}$).

In such a setting, Theorem~\ref{thm:ggh} can be restated as the fact that if   $\Ex(r)\leq C$ (for some constant $C$) then
 for any $\epsilon>0$, there is an integer $g$ such that every 
 $G\in\mathcal{C}$ with girth at least $g$ has
 $\chi_c(G)\leq 2+\epsilon$.
However, such a statement cannot be extended to all classes with bounded expansion, and even to all classes $\mathcal{C}$ with exponential 
expansion: there exists $3$-regular graphs with
arbitrary large girth and circular chromatic number at least
$7/3$~\cite{Hatami2005}, thus the class $\mathcal D$ of all graphs with maximum degree $3$ (which is such that $\Ex[D](r)=(3/2)2^r$)
does not have the property that large girth implies a circular chromatic number arbitrarily close to $2$.

We show in this paper (Corollary~\ref{cor:circ}) that exponential expansion is indeed a threshold, in the sense that the condition
$\log\Ex(r)/r=o(1)$ (as $r\rightarrow\infty$) is sufficient to ensure the conclusion of Theorem~\ref{thm:ggh}.
Hence a natural problem arises, to make this threshold more precise:

\begin{problem}
\label{pb:circ}
What is the maximum real integer $c$ such that for every class 
 $\mathcal C$  with
$$\lim_{r\rightarrow\infty}\frac{\log\Ex(r)}{r}\leq c$$ 
and every positive real $\epsilon>0$ there is
an integer $g$ with the property that every  graph 
$G\in\mathcal C$ with girth at least $g$ has
 $\chi_c(G)\leq 2+\epsilon$?
\end{problem}

It follows from \cite{Hatami2005} and Corollary~\ref{cor:circ} that $0\leq c<\log 2$, and we conjecture $c=0$.

Other application of our approach are Theorem~\ref{thm:orient1}, which concerns the oriented chromatic number and which is
the subject of Section~\ref{sec:circ}, and Theorem~\ref{thm:chip},.which concerns the $p$th chromatic number $\chi_p$ and which is
the subject of Section~\ref{sec:chip}.

\section{Path-degeneracy}
One of the main tools used to address homomorphism properties of graphs (or directed graphs) with high-girth 
is the notion of path-degeneracy (see for instance \cite{Galluccio2001,Nevsetvril1997}.
\begin{definition}
A graph $G$ is {\em $p$-path degenerate} if there is a sequence
$G=G_0,G_1,\dots, G_t$ of subgraphs of $G$ such that $G_t$ is
a forest, and each $G_i$ ($i>0$) is obtained from $G_{i-1}$ by deleting the
internal vertices of a path of length at least $p$, all of degree two.
\end{definition}

We shall relate path-degeneracy of large girth graphs in a class $\mathcal C$ to expansion properties
of the class $\mathcal{C}$. First we recall a result that will needed for the proof.

\begin{theorem}[Diestel and Rempel \cite{Diestel2004}]
\label{thm:dr}
Let $d$ be an integer and let $G$ be a graph.
If ${\rm girth}(G)>6d+3$ and $\delta(G)\geq 3$ then 
$\rdens{2d}(G)\geq 2^d$
\end{theorem}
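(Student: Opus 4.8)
The plan is to construct, from such a $G$, a graph $H\in G\shm 2d$ with $\|H\|\ge 2^d\,|H|$; since $\rdens{2d}(G)=\max_{H'\in G\shm 2d}\|H'\|/|H'|$, this gives the bound. The guiding intuition is that contracting balls of radius about $d$ turns the long cycles of $G$ (of length $>6d+3$) into short cycles of the quotient, so that the quotient becomes dense even though $G$ itself is locally tree-like.

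First I would record the two structural facts the hypotheses provide. Since $\delta(G)\ge 3$ and $\mathrm{girth}(G)>6d+3>4d+1$, for every vertex $v$ the ball $B_d(v)$ spans an induced tree in which $v$ has at least $3$ children and every vertex at depth $<d$ has at least $2$ children (a horizontal edge, a second parent, or a chord within $B_d(v)$ would create a cycle of length $\le 2d+1<\mathrm{girth}(G)$); hence $B_d(v)$ has at least $3\cdot 2^{d-1}\ge 2^d$ vertices at distance exactly $d$ and $|B_d(v)|\ge 3\cdot 2^d-2$. Moreover any subgraph of $G$ contained in a ball of radius $2d$ still has girth $>6d+3$, hence cyclomatic number only $O(|V|/d)$ — a negligible excess over being a forest. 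Next, pick a \emph{maximal} set $S\subseteq V(G)$ whose elements are pairwise at distance $\ge 2d+1$. Maximality forces every vertex of $G$ to lie within distance $2d$ of $S$, while the distance condition makes the balls $B_d(s)$, $s\in S$, pairwise disjoint. Send each vertex of $G$ to a nearest element of $S$ (ties broken by a fixed linear order on $S$), and let $C_s$ be the resulting part through $s$; a short shortest-path argument shows each $C_s$ induces a connected subgraph with $B_d(s)\subseteq C_s\subseteq B_{2d}(s)$, so $C_s$ has radius $\le 2d$ in $G$. Then $H$, the simple graph on vertex set $S$ with $ss'\in E(H)$ whenever some vertex of $C_s$ is adjacent in $G$ to some vertex of $C_{s'}$, belongs to $G\shm 2d$.

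It remains to estimate $\|H\|/|H|$. Put $n=|V(G)|$ and $N=|S|=|H|$. Each part lies in a radius-$2d$ ball, hence has at most $|C_s|-1+O(|C_s|/d)$ edges inside it; summing, the edges of $G$ inside parts number at most $(1+O(1/d))n-N$, so the number $E$ of edges of $G$ between distinct parts satisfies $E\ge\|G\|-(1+O(1/d))n+N\ge(\tfrac12-O(1/d))n+N$. On the other hand $\sum_s|C_s|=n$ with $|C_s|\ge 3\cdot 2^d-2$, so $N\le n/(3\cdot 2^d-2)$, and therefore $E/N\ge(\tfrac12-O(1/d))(3\cdot 2^d-2)+1\ge(\tfrac32-o(1))\,2^d$. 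Finally $\|H\|\ge E/\mu$, where $\mu$ is the largest number of edges of $G$ running between a single pair of parts, so it suffices that passing to the simple graph $H$ costs less than the factor $3/2$ of slack above.

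\emph{The last point is the real obstacle.} Two edges between $C_s$ and $C_{s'}$ only force a cycle of length $\le 8d+2$, which $\mathrm{girth}(G)>6d+3$ does not exclude, so one cannot simply claim $\mu=1$. The way to proceed is to exploit the tree-like geometry of the balls $B_{2d}(s)$: if $a_1b_1,\dots,a_kb_k$ are the edges between $C_s$ and $C_{s'}$ (with $a_i\in C_s$), then combining $a_ib_i$, $a_jb_j$ with paths inside the two parts gives $d_{C_s}(a_i,a_j)+d_{C_{s'}}(b_i,b_j)\ge \mathrm{girth}(G)>6d+3$, so (as each part has radius $\le 2d$) the $a_i$ are pairwise at distance $>2d$ in $C_s$; in a radius-$2d$ tree this forces their depth-$(d-1)$ ancestors to be distinct, so $k$ is at most the number of such ancestors, which is bounded by $|B_d(s)|\le|C_s|$. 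Pushing this accounting through — possibly after re-tuning the packing radius to balance the exponential lower bound on $|C_s|$ against the bound on $\mu$ — is where the technical work lies, but no new idea beyond the packing/contraction construction above should be needed.
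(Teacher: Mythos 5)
This statement is quoted from Diestel and Rempel \cite{Diestel2004}; the paper gives no proof of it, so your attempt can only be judged on its own terms, and there it has a genuine gap exactly at the point you flag. Your packing/contraction construction is sound up to the multigraph count: the parts $C_s$ are connected of radius at most $2d$, each contains $B_d(s)$, so $|C_s|\ge 3\cdot 2^d-2$, and each $C_s$ in fact induces a \emph{tree} (a non-tree edge with respect to the BFS tree of $C_s$ rooted at $s$ would close a cycle of length at most $4d+1<{\rm girth}(G)$ -- this replaces your shaky ``cyclomatic number $O(|V|/d)$'' claim, which is not justified as stated). Hence the number $E$ of edges of $G$ between distinct parts satisfies $E/N\ge \tfrac{3}{2}\cdot 2^d$ with $N=|S|$. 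But the theorem needs the \emph{simple} minor $H$ to have $\|H\|/|H|\ge 2^d$, so you must show the average number of $G$-edges between an adjacent pair of parts is at most $3/2$. With girth only $>6d+3$, two edges between parts of radius $2d$ create a cycle of length up to $8d+2$, which is not excluded, and your own bound on the multiplicity ($k\le |B_d(s)|$, besides the off-by-one: it is the depth-$d$ ancestors that must be distinct) is exponentially weaker than the factor $3/2$ of slack available. Nothing in the outline controls the average multiplicity.

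Moreover, the suggested repair -- ``re-tuning the packing radius'' -- provably cannot reach the stated bound. If the centers are chosen pairwise at distance $\ge a+1$, the parts have radius $\le a$ and contain disjoint balls of radius $\lfloor a/2\rfloor$; forcing multiplicity one requires $4a+2<{\rm girth}(G)$, i.e.\ $a\lesssim 3d/2$, which yields density only about $2^{3d/4}$, while keeping $a=2d$ (to get the $2^d$ from the inner balls) requires girth $>8d+2$. So your argument, made rigorous, proves the theorem with $8d+3$ in place of $6d+3$ (or $\rdens{2d}(G)\ge 2^{3d/4}$ under the stated girth), but not the statement itself; bridging $8d$ down to $6d$ is precisely the content of the Diestel--Rempel paper and requires a further idea about how several crossing edges between two tree-like parts interact with the girth condition (e.g.\ exploiting that two crossing edges both incident to the inner balls $B_d$ already force a cycle of length at most $6d+2$), rather than a mere change of parameters.
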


We now explicit the connection between path-degeneracy and expansion properties.

\begin{lemma}
\label{lem:2}
Let $\mathcal C$ be a class of graphs, let $p,g\in\bbbn$, and 
let $r=\lceil pg/3\rceil$.
If it holds
 $$
 \frac{\log\Ex(r)}{r}\leq \frac{1}{5p}.
 $$

Then every graph $G$ in $\mathcal C$ with girth at least $g$
is $p$-path degenerate.
\end{lemma}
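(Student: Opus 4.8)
\medskip
\noindent\emph{Proof plan.}
The plan is to prove the contrapositive: assuming $G\in\mathcal C$ has ${\rm girth}(G)\ge g$ and is \emph{not} $p$-path degenerate, I will produce a sufficiently dense shallow minor of $G$ at depth $r=\lceil pg/3\rceil$ to contradict $\log\Ex(r)/r\le 1/(5p)$.

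First I would reduce $G$ greedily: starting from $G$, repeatedly delete the internal vertices of a path of length at least $p$ all of whose internal vertices have degree two (and delete any vertex of degree at most one, which cannot affect being a forest) until no such step applies, obtaining a subgraph $G'\subseteq G$. Since $G$ is not $p$-path degenerate, $G'$ is not a forest, so $\delta(G')\ge 2$ and $G'$ has a component that is not a cycle --- the cycle components are themselves reducible once $g$ is not too small and may be discarded. By maximality, every maximal path of degree-two vertices of $G'$ has fewer than $p$ edges; hence, letting $H$ be obtained from $G'$ by suppressing all vertices of degree two, $\delta(H)\ge 3$ and $G'$ is a subdivision of $H$ in which every edge is replaced by a path of at most $p-1$ edges. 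Two consequences follow: ${\rm girth}(H)\ge {\rm girth}(G')/(p-1)\ge g/(p-1)$, and $H$ is realized as a shallow minor of $G$ at depth at most $\lceil(p-1)/2\rceil$ (to a branch vertex of $H$ assign itself together with the first halves of its incident subdivision paths).

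Then I would apply Theorem~\ref{thm:dr} to $H$. With $d=\bigl\lfloor({\rm girth}(H)-4)/6\bigr\rfloor$ we have $6d+3<{\rm girth}(H)$, so $\rdens{2d}(H)\ge 2^d$: some graph $M$ is a shallow minor of $H$ at depth $2d$ with $\|M\|\ge 2^{d}|M|$. Inflating each branch set of $M$ along the subdivision paths of $G'$ realizes $M$ as a shallow minor of $G'$, hence of $G$, at depth at most $2d(p-1)+O(p)$; since $d$ is of order $g/(p-1)$ this depth is of order $g/3$, which is at most $r$ once $g$ is large enough in terms of $p$. Therefore $\Ex(r)\ge\rdens{r}(G)\ge 2^d$, and inserting the values of $d$ and $r$ into $\log\Ex(r)/r\ge(d\log 2)/r$ gives, after a short computation with these parameters, the strict inequality $\log\Ex(r)/r>1/(5p)$, a contradiction; the remaining small cases (forests and short cycles) are immediate.

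The step I expect to be the main obstacle is the joint bookkeeping in the last two paragraphs: one must control simultaneously the subdivision length of $G'$ over $H$, the loss of girth in passing from $G'$ to $H$, and the blow-up of shallow-minor depth in passing back from $M$ to $G$, arranging that the minor produced still lives at depth at most $r$ while remaining dense enough to beat the threshold $1/(5p)$. It is exactly this three-way trade-off that dictates the particular exponent $pg/3$ in the depth and the bound $1/(5p)$, and the estimates there leave essentially no slack.
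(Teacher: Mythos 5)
Your overall route is the same as the paper's (delete long degree-two paths, suppress the remaining degree-two vertices to get a graph $H$ with $\delta(H)\ge 3$, apply Theorem~\ref{thm:dr} to $H$, transfer the dense shallow minor back to $G$, and compute), but the step you defer to ``a short computation'' is exactly where the argument fails to close with your own parameters. With your (correct) bookkeeping ${\rm girth}(H)\ge g/(p-1)$, Theorem~\ref{thm:dr} can only be invoked with $d\approx\frac{g}{6(p-1)}$, and the resulting minor of $G$ lives at depth about $g/3+p/2\le r$; by monotonicity of $\rdens{r}$ this yields $\frac{\log\Ex(r)}{r}\ge\frac{d\log 2}{r}\approx\frac{\log 2}{2p(p-1)}$. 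However $\frac{\log 2}{2(p-1)}>\frac{1}{5}$ holds only for $p\le 2$: for every $p\ge 3$ the lower bound $\frac{\log 2}{2p(p-1)}$ falls \emph{below} the threshold $\frac{1}{5p}$, so no contradiction with the hypothesis is obtained. The unused depth budget (you are allowed depth $r\approx pg/3$ in $G$, i.e.\ about $\frac{pg}{3(p-1)}$ in $H$, while the minor only needs depth $2d\approx\frac{g}{3(p-1)}$) does not help, because the density guarantee of Theorem~\ref{thm:dr} is capped by the girth of $H$, not by the depth you are willing to spend. So the ``three-way trade-off'' you describe as having essentially no slack in fact has negative slack for $p\ge 3$, and the claimed strict inequality $\log\Ex(r)/r>1/(5p)$ is not established by your plan.

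For comparison, the paper's proof takes a minimal counterexample (which yields $2$-connectedness, so no cycle components need separate treatment) and then applies Theorem~\ref{thm:dr} with $d=(g-3)/6$, i.e.\ with a value of $d$ tied to the girth of $G$ rather than to the girth of the suppressed graph; it is precisely this choice that produces $\frac1p\cdot\frac{d\log 2}{2d+1}>\frac1{5p}$. To make your version reach the stated constant you would have to show that the suppressed graph keeps girth close to $g$ --- your greedy reduction only gives $g/(p-1)$, and that bound is attained, e.g., by $(p-2)$-subdivisions of cubic graphs of girth $g/(p-1)$ --- or else weaken the threshold to one of order $1/(p(p-1))$. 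As written, the proposal therefore has a genuine gap at its final, decisive estimate. (Two smaller points: discarding cycle components, and indeed the whole statement, needs $g\ge p+1$, which should be said; and the claim that the depth bound holds ``once $g$ is large enough in terms of $p$'' is an extra largeness assumption not present in the lemma.)
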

\begin{proof}
Our proof follows similar lines as the proof (\cite{Galluccio2001}, Lemma~2.5).
Assume for contradiction that $\mathcal C$ include graphs with girth at
least $g$ that are not $p$-path degenerate, and let $G$ be a minimal such
graph.
 The graph $G$ is $2$-connected, since a graph is $p$-path
degenerate if all its blocks are. 
The graph $G$ is not a circuit since, as $l\geq p+1$, the graph $G$
would be $p$-path degenerate.
Hence $G$ is neither an edge or a circuit.
Therefore, there exists a unique graph $G'$ with minimum
degree at least $3$, which is homeomorphic to $G$. The graph $G$ can be
obtained from $G'$ by replacing each edge $e\in E(G')$ with a path $P(e)$ of
length at least one and at most $(p-1)$. Thus $G'\in\mathcal C\shm (p-1)/2$.
Let $d=(g-3)/6$.
According to Theorem~\ref{thm:dr},
if $G'$ has girth greater than $l$ then $\rdens{2d}(G')\geq 2^d$.
As $(\mathcal C\shm (p-1)/2)\shm
2d\subseteq \mathcal C\shm (2dp+(p-1)/2)$ it holds 
 $$\rdens{\lceil lp/3\rceil}(G)\geq\rdens{2dp+(p-1)/2}(G)\geq \rdens{2d}(G')\geq
 2^d.$$
Hence it holds
$$
\frac{\log\rdens{r}(G)}{r}\geq \frac{1}{p}\cdot\frac{d\log
2}{2d+1}> \frac{1}{5p},$$
what contradicts our assumption.
\end{proof}

\section{Circular Chromatic Number of Graphs with Large Girth}
\label{sec:circ}

We shall make use of the following property.

 \begin{lemma}[Bondy and Hell \cite{Bondy1990}]
For every graph $G$ and integer $k$, the following are equivalent:
\begin{itemize}
  \item $\chi_c(G)\leq 2+\frac{1}{k}$
  \item $G\rightarrow C_{2k+1}$
\end{itemize}
\end{lemma}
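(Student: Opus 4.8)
The plan is to recognize $C_{2k+1}$ as a circular clique and so reduce both conditions to statements about $(n,d)$-colorings. Recall that for integers $n\ge 2d\ge 2$ the circular clique $K_{n/d}$ has vertex set $\bbbz_n$, with $i\sim j$ precisely when the circular distance $\min\{|i-j|,n-|i-j|\}$ is at least $d$; a map $V(G)\to\bbbz_n$ sending the two ends of each edge to elements at distance at least $d$ (an \emph{$(n,d)$-coloring}) is exactly a homomorphism $G\to K_{n/d}$, and by definition $\chi_c(G)=\inf\{n/d:G\text{ has an }(n,d)\text{-coloring}\}$. Since in $\bbbz_{2k+1}$ the circular distance never exceeds $k$, the edges of $K_{(2k+1)/k}$ are the pairs $\{i,j\}$ with $i-j\equiv\pm k\pmod{2k+1}$; as $\gcd(k,2k+1)=1$ the element $k$ generates $\bbbz_{2k+1}$, so these edges form a single $(2k+1)$-cycle and $K_{(2k+1)/k}\cong C_{2k+1}$. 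The lemma is therefore equivalent to the assertion that $\chi_c(G)\le(2k+1)/k$ if and only if $G$ admits a $(2k+1,k)$-coloring.

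One implication is immediate: composing a homomorphism $G\to C_{2k+1}$ with the isomorphism $C_{2k+1}\cong K_{(2k+1)/k}$ yields a $(2k+1,k)$-coloring of $G$, and such a coloring witnesses $\chi_c(G)\le(2k+1)/k=2+\frac1k$.

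For the converse I would use a monotonicity principle: if $G$ has an $(n,d)$-coloring, then $G$ has an $(n',d')$-coloring for every rational $n'/d'\ge n/d$. This rests on two elementary operations. \emph{Scaling}: replacing each color $c$ by $ac$ turns an $(n,d)$-coloring into an $(an,ad)$-coloring. \emph{Gap insertion}: relabeling the colors so that one residue of $\bbbz_{n+1}$ is left unused; since this never decreases the circular distance between two colors, an $(n,d)$-coloring remains an $(n+1,d)$-coloring. Writing $n'/d'=p/q$ with $p,q$ positive integers, one first scales with $a=q$ and then inserts $pd-qn\ge0$ gaps, obtaining a $(pd,qd)$-coloring of ratio $p/q$. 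Now assume $\chi_c(G)\le(2k+1)/k$ (we may assume $G$ has an edge, so the ratios considered are all $\ge2$). If the inequality is strict, the definition of the infimum provides an $(n,d)$-coloring with $n/d<(2k+1)/k$, and the monotonicity principle upgrades it to a $(2k+1,k)$-coloring, i.e.\ a homomorphism $G\to C_{2k+1}$. If $\chi_c(G)=(2k+1)/k$, I would invoke the standard fact that for a finite graph the infimum defining $\chi_c$ is attained \cite{Bondy1990}; this yields an $(n,d)$-coloring with $n/d=(2k+1)/k$, hence a $(2k+1,k)$-coloring, and again $G\to C_{2k+1}$.

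The main obstacle is this converse direction, and within it the boundary case $\chi_c(G)=2+\frac1k$, which genuinely needs that the infimum defining $\chi_c$ is achieved for finite graphs; this is the only non-routine ingredient, the monotonicity principle being the gap-insertion and scaling bookkeeping above, and the identification $C_{2k+1}\cong K_{(2k+1)/k}$ a one-line coprimality observation. (For an infinite graph $G$ one would additionally reduce to finite subgraphs via a compactness argument.)
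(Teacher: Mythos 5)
The paper itself offers no proof of this lemma (it is simply quoted from Bondy and Hell), so your sketch can only be judged on its own merits. Your overall route --- identify $C_{2k+1}$ with the circular clique $K_{(2k+1)/k}$ (the coprimality observation is correct), dispose of one direction trivially, and handle the converse via a monotonicity principle for $(n,d)$-colorings together with attainment of the infimum for finite graphs --- is the standard one and is sound in outline. However, the justification you give for the monotonicity principle has a concrete gap. Scaling multiplies the gap $d$ by $a$, and gap insertion leaves the gap unchanged; so starting from an $(n,d)$-coloring, these two operations can only ever produce colorings whose gap is a multiple of $d$. Your construction therefore ends with a $((2k+1)d,kd)$-coloring, not the $(2k+1,k)$-coloring you need, and since $d$ need not divide $k$ you cannot reach modulus $2k+1$ with gap $k$ by these two moves alone. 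The sentence ``the monotonicity principle upgrades it to a $(2k+1,k)$-coloring'' thus invokes the principle in a stronger form than the one you actually established: what you proved yields a coloring of the right \emph{ratio}, whereas a homomorphism to $C_{2k+1}$ requires modulus exactly $2k+1$.

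The missing ingredient is a quotient step. From a $((2k+1)d,kd)$-coloring $c$ define $c'(v)=\lfloor c(v)/d\rfloor\in\bbbz_{2k+1}$; if two colors differ (circularly) by at least $kd$ in $\bbbz_{(2k+1)d}$, then their difference lies between $kd$ and $(k+1)d$, so the floors differ by $k$ or $k+1$, and in either case the circular distance in $\bbbz_{2k+1}$ is exactly $k$. Equivalently, one may quote the standard fact that $K_{n/d}\rightarrow K_{n'/d'}$ whenever $2\le n/d\le n'/d'$, whose proof is precisely this floor map. With this step added your argument is complete; leaning on the cited fact that the infimum defining $\chi_c$ is attained for finite graphs is legitimate (it does not depend on the equivalence being proved) and, as you correctly note, it is genuinely needed in the boundary case $\chi_c(G)=2+\frac{1}{k}$.
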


Existence of a homomorphism to an odd cycle is also linked to path-degenacy.

\begin{lemma}[Galluccio, Goddyn, and Hell~\cite{Galluccio2001}]
\label{lem:ggh}
A $p$-path degenerate graph $G$ admits a homomorphism to any odd circuit of
length at most $p+1$.

In other words, a $p$-path degenerate graph $G$ has
$$
\chi_c(G)\leq 2+\frac{1}{\lfloor p/2\rfloor}.
$$
\end{lemma}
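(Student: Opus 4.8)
The plan is to prove the last displayed statement, namely that a $p$-path degenerate graph $G$ admits a homomorphism to any odd circuit $C_{2m+1}$ with $2m+1 \leq p+1$, by inducting along the path-degeneracy sequence $G = G_0, G_1, \dots, G_t$, building the homomorphism ``backwards'' from $G_t$ to $G_0$.

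First I would handle the base case: $G_t$ is a forest, and every forest is bipartite, hence maps homomorphically to an edge $K_2$, and $K_2$ maps into $C_{2m+1}$ (any odd cycle contains an edge), so $G_t \to C_{2m+1}$. For the inductive step, suppose we already have a homomorphism $\phi_i \colon G_i \to C_{2m+1}$, where $G_{i-1}$ is obtained from $G_i$ by reinserting the internal vertices of a path $P$ of length $\ell \geq p$ all of degree two in $G_{i-1}$; equivalently, an edge (or more precisely a pair of endpoints) $uv$ that is present via $P$ in $G_{i-1}$ corresponds to $u,v$ already coloured by $\phi_i$, and we must extend the colouring to the $\ell-1$ new internal vertices of $P$ so that consecutive vertices receive adjacent colours in $C_{2m+1}$. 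The key combinatorial fact is that in the cycle $C_{2m+1}$, for any two vertices $a,b$ and any length $\ell$ with $\ell \geq 2m$ and $\ell \equiv \operatorname{dist}(a,b) \pmod 2$, there is a walk of length exactly $\ell$ from $a$ to $b$; in fact $C_{2m+1}$ being non-bipartite, once $\ell$ is large enough ($\ell \geq 2m$ suffices, using that one can ``waste'' two steps anywhere and also traverse the odd cycle once to flip parity) one can realize any residue. Since $\ell \geq p \geq 2m$ (because $2m+1 \leq p+1$ gives $2m \leq p \leq \ell$), such a walk exists regardless of the parity of $\operatorname{dist}(\phi_i(u),\phi_i(v))$, so we can colour the internal vertices of $P$ by the vertices of this walk. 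This defines $\phi_{i-1}$ on all of $G_{i-1}$, completing the induction; applying it down to $i=0$ yields $G = G_0 \to C_{2m+1}$.

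The main obstacle is the precise walk-length lemma in $C_{2m+1}$: one must verify that a closed-up-to-endpoints walk of \emph{any} given length $\ell \geq 2m$ exists between any ordered pair of vertices. The clean way is to note that between $a$ and $b$ there are the two arcs of the cycle, of lengths $s$ and $2m+1-s$ where $s = \operatorname{dist}$-style arc length, which have opposite parities; whichever of these has length $\leq 2m$ (at least one does, and actually the shorter one has length $\leq m$), pad it to length $\ell$ by repeatedly inserting a back-and-forth step $x \to y \to x$ (adding $2$ each time) — this works as long as $\ell$ has the same parity as that arc and $\ell$ is at least that arc's length; and the other arc handles the opposite parity, needing $\ell \geq 2m+1-s$. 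Since the two arc lengths are $\leq m$ and $\leq 2m$ respectively (the short one and the long one), and $\ell \geq 2m$, both parities are covered. A small care point: ``length at least $p$'' versus the stated bound $\lfloor p/2 \rfloor$ — here we use odd circuits of length $2m+1 \leq p+1$, i.e. $m \leq p/2$, so the best bound on $\chi_c$ from the Bondy–Hell lemma is $2 + 1/m$ with $m = \lfloor p/2 \rfloor$, which is exactly the second displayed inequality; I would state the homomorphism part first and then simply invoke the Bondy–Hell lemma to translate it into the $\chi_c$ bound. The rest (the base case, the bookkeeping of the induction) is routine.
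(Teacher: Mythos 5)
Your argument is correct, and in fact the paper itself gives no proof of this lemma: it is quoted from Galluccio, Goddyn and Hell, so there is nothing internal to compare against. Your proof is essentially the standard one behind the cited result: peel the path-degeneracy sequence backwards, start from a forest mapped into an edge of $C_{2m+1}$, and extend across each reinserted path of length $\ell\geq p$ using the fact that between any two (not necessarily distinct) vertices $a,b$ of $C_{2m+1}$ there is a walk of any prescribed length $\ell\geq 2m$. The details all check: the two $a$--$b$ arcs have lengths of opposite parity, the longer one is at most $2m$ when $a\neq b$, and in the only delicate case $a=b$ with $\ell$ odd, the parity of $\ell$ together with $\ell\geq 2m$ forces $\ell\geq 2m+1$, so one can traverse the whole cycle and pad with back-and-forth steps; since the internal vertices of the reinserted path have degree two, the walk gives a valid extension of the homomorphism, and $2m\leq p\leq\ell$ follows from $2m+1\leq p+1$. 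Taking $m=\lfloor p/2\rfloor$ and invoking Bondy--Hell then yields $\chi_c(G)\leq 2+1/\lfloor p/2\rfloor$, exactly as you say.
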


We deduce the following extension of Theorem~\ref{thm:ggh}.
\begin{theorem}
Let $\mathcal C$ be a class of graphs and let $\epsilon>0$.
If it holds
 $$
\liminf_{r\rightarrow\infty}\frac{\log\Ex(r)}{r}\leq \frac{\epsilon}{10}.
$$
Then there is an integer $g$ such that every graph 
$G\in\mathcal C$ with girth at least $g$ has
 $\chi_c(G)\leq 2+\epsilon$.
\end{theorem}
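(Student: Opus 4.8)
The plan is to combine Lemma~\ref{lem:2}, Lemma~\ref{lem:ggh}, and the Bondy--Hell characterization in a direct way. First I would fix $\epsilon>0$ and choose an integer $p$ large enough that $2+\frac{1}{\lfloor p/2\rfloor}\leq 2+\epsilon$; concretely $p=2\lceil 1/\epsilon\rceil$ suffices, so that $\lfloor p/2\rfloor\geq 1/\epsilon$. With this $p$ fixed, I then want to invoke Lemma~\ref{lem:2} to conclude that every sufficiently high-girth graph in $\mathcal C$ is $p$-path degenerate, and then Lemma~\ref{lem:ggh} gives $\chi_c(G)\leq 2+\frac{1}{\lfloor p/2\rfloor}\leq 2+\epsilon$.

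The one subtlety is that Lemma~\ref{lem:2} is stated for a fixed $g$ via the single condition $\log\Ex(r)/r\leq \frac{1}{5p}$ evaluated at $r=\lceil pg/3\rceil$, whereas the hypothesis here is only a $\liminf$ condition, namely $\liminf_{r\to\infty}\log\Ex(r)/r\leq \epsilon/10$. Since $\epsilon/10\leq \frac{1}{5p}\cdot\frac{p\epsilon}{2}\cdot(\text{something})$ — more precisely, because $\lfloor p/2\rfloor\geq 1/\epsilon$ gives $p\geq 2/\epsilon$, hence $\frac{1}{5p}\leq \frac{\epsilon}{10}$, I have to be slightly careful about the direction of the inequality. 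The clean way is: the $\liminf$ being at most $\epsilon/10$ means there are infinitely many $r$ with $\log\Ex(r)/r < \epsilon/10 + \eta$ for any $\eta>0$; but I actually need it \emph{small}, i.e. below $\frac{1}{5p}$. So I should instead pick $p$ so that $\frac{1}{5p}\geq \frac{\epsilon}{10}$ is false in the wrong direction — rather, I pick $p$ with $\lfloor p/2\rfloor\geq 1/\epsilon$ \emph{and} additionally large enough that $\frac{1}{5p}$ is still at least the $\liminf$ value; since the $\liminf$ is $\leq \epsilon/10 = \frac{1}{10/\epsilon}\leq\frac{1}{5p}$ exactly when $p\leq 2/\epsilon$, there is a genuine tension. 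The resolution the authors surely intend: take $p=\lfloor 2/\epsilon\rfloor$ (or the largest even integer $\leq 2/\epsilon$), so that simultaneously $\lfloor p/2\rfloor\geq 1/\epsilon - 1$ is close enough to give $2+1/\lfloor p/2\rfloor\leq 2+\epsilon$ after absorbing constants, and $\frac{1}{5p}\geq \frac{\epsilon}{10}$, so the $\liminf$ hypothesis does place infinitely many $r$ below the threshold $\frac{1}{5p}$.

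So the steps in order are: (i) choose the even integer $p$ with $p\leq 2/\epsilon < p+2$, so that $1/\lfloor p/2\rfloor \leq \epsilon$ (adjusting by a harmless constant factor in $\epsilon$ if needed, or simply noting $1/\lfloor p/2 \rfloor \le 2/p \le \epsilon$); (ii) since $\frac{1}{5p}\geq \frac{\epsilon}{10}\geq \liminf_{r\to\infty}\log\Ex(r)/r$, there exist arbitrarily large $r$ with $\log\Ex(r)/r\leq \frac{1}{5p}$; (iii) for any such $r$, set $g$ to be an integer with $\lceil pg/3\rceil = r$ (equivalently $g\leq 3r/p$ roughly), or more simply pick $g$ first and then an appropriate $r$ from the $\liminf$ set that is at least $\lceil pg/3\rceil$, using that $\Ex$ is non-decreasing only after reconciling indices — the monotonicity subtlety is the place to be careful; (iv) apply Lemma~\ref{lem:2} to get that every $G\in\mathcal C$ with girth $\geq g$ is $p$-path degenerate; (v) apply Lemma~\ref{lem:ggh} to conclude $\chi_c(G)\leq 2+\frac{1}{\lfloor p/2\rfloor}\leq 2+\epsilon$.

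The main obstacle I anticipate is purely bookkeeping: matching the $\liminf$ quantifier (``infinitely many good $r$'') against Lemma~\ref{lem:2}'s need for the threshold to hold at the \emph{specific} value $r=\lceil pg/3\rceil$. Since $\Ex$ need not be monotone in a way that lets one transfer a bound at a large $r$ down to $\lceil pg/3\rceil$, the right move is to let the good $r$ dictate $g$: given a good $r$ (with $\log\Ex(r)/r\leq 1/(5p)$), choose $g=\lfloor 3r/p\rfloor$ so that $\lceil pg/3\rceil\leq r$ and — using $\Ex$ non-decreasing — $\log\Ex(\lceil pg/3\rceil)/\lceil pg/3\rceil$ is controlled; one checks the arithmetic goes through because $r$ can be taken as large as we like, so $g\to\infty$ as well. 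Once that indexing is pinned down, everything else is an immediate chaining of the two cited lemmas.
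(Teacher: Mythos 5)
Your proposal follows essentially the same route as the paper: it takes $p\approx 2/\epsilon$ (the paper uses $p=\lfloor 2/\epsilon\rfloor$, so that $1/(5p)\geq\epsilon/10$), extracts from the $\liminf$ hypothesis an integer $r$ with $\sup_{G\in\mathcal C}\log\rdens{r}(G)/r\leq 1/(5p)$, sets $g\approx 3r/p$, and chains Lemma~\ref{lem:2} with Lemma~\ref{lem:ggh} (via the Bondy--Hell characterization). The rounding and indexing subtleties you agonize over (making $\lceil pg/3\rceil$ match the good $r$, and getting $1/\lfloor p/2\rfloor\leq\epsilon$ on the nose) are present in, and glossed over by, the paper's own proof in exactly the same way, so your argument is as complete as the published one.
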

\begin{proof}
Let $p=\lfloor 2/\epsilon\rfloor$.
As 
$$
\liminf_{r\rightarrow\infty}\frac{\log\Ex(r)}{r}=
\liminf_{r\rightarrow\infty}\sup_{G\in\mathcal
C}\frac{\log\rdens{r}(G)}{r}\leq \frac{\epsilon}{10}\leq \frac{1}{5p}
$$
there exists $r\in\bbbn$ such that
$$
\sup_{G\in\mathcal
C}\frac{\log\rdens{r}(G)}{r}\leq \frac{1}{5p}.
$$
Let $g=[3r/p]$.
According to Lemma~\ref{lem:2}, every graph $G\in\mathcal{C}$ with girth at least $g$ is $p$-degenerate.
Thus, according to Lemma~\ref{lem:ggh}, every graph $G\in\mathcal{C}$ with girth at least $g$ is such that
$\chi_c(G)\leq 2+\epsilon$.
\end{proof}
\begin{corollary}
\label{cor:circ}
Let $\mathcal C$ be a class of graphs such that
 $$
\lim_{r\rightarrow\infty}\frac{\log\Ex(r)}{r}=0.
$$
Then for every positive real $\epsilon>0$ there is an integer $g$ such that every graph 
$G\in\mathcal C$ with girth at least $g$ has
 $\chi_c(G)\leq 2+\epsilon$.
\end{corollary}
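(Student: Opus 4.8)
The plan is to derive Corollary~\ref{cor:circ} directly from the preceding Theorem, which already establishes the implication under the hypothesis $\liminf_{r\to\infty}\frac{\log\Ex(r)}{r}\le\frac{\epsilon}{10}$. The observation is that the hypothesis of the Corollary is strictly stronger: if $\lim_{r\to\infty}\frac{\log\Ex(r)}{r}=0$, then in particular $\liminf_{r\to\infty}\frac{\log\Ex(r)}{r}=0$, and since $0\le\frac{\epsilon}{10}$ for any positive real $\epsilon$, the hypothesis of the Theorem is satisfied for \emph{every} such $\epsilon$.

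So the argument runs as follows. Fix an arbitrary positive real $\epsilon>0$. Since $\lim_{r\to\infty}\frac{\log\Ex(r)}{r}=0$, the limit inferior of the same sequence is also $0$, hence $\liminf_{r\to\infty}\frac{\log\Ex(r)}{r}=0\le\frac{\epsilon}{10}$. This is exactly the hypothesis of the Theorem, so we may apply it: there exists an integer $g$ (depending on $\mathcal C$ and $\epsilon$) such that every graph $G\in\mathcal C$ with girth at least $g$ satisfies $\chi_c(G)\le 2+\epsilon$. Since $\epsilon>0$ was arbitrary, this establishes the Corollary.

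There is essentially no obstacle here — this is a routine specialization of the Theorem, included for emphasis because it isolates the clean "sub-exponential expansion'' threshold condition. The only point worth a moment's care is the elementary fact that a convergent sequence has its limit inferior equal to its limit; no additional machinery is needed. One could also note for context that the converse-type sharpness (that the threshold cannot be pushed all the way to exponential expansion) is exactly what the $3$-regular examples of~\cite{Hatami2005} show, but that is discussion rather than part of the proof.

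\begin{proof}
Let $\epsilon>0$ be an arbitrary positive real. Since the sequence $\bigl(\frac{\log\Ex(r)}{r}\bigr)_{r\in\bbbn}$ converges to $0$, its limit inferior is also $0$, so
$$
\liminf_{r\to\infty}\frac{\log\Ex(r)}{r}=0\leq\frac{\epsilon}{10}.
$$
By the Theorem above, there is an integer $g$ such that every graph $G\in\mathcal C$ with girth at least $g$ has $\chi_c(G)\leq 2+\epsilon$. As $\epsilon>0$ was arbitrary, this proves the claim.
\end{proof}
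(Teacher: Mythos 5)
Your proposal is correct and is exactly the intended derivation: the paper states the corollary as an immediate specialization of the preceding theorem, since $\lim_{r\to\infty}\log\Ex(r)/r=0$ gives $\liminf_{r\to\infty}\log\Ex(r)/r=0\leq\epsilon/10$ for every $\epsilon>0$. Nothing further is needed.
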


\section{Oriented Chromatic Number of Graphs with Large Girth}

Recall that the {\em oriented chromatic number} $\chi_o(G)$
 of a (simple) graph $G$ is the minimum $k$ such that every orientation of $G$ admits a homomorphism into some simple digraph with $k$ vertices.
 
It was proved in~\cite{Nevsetvril1997} that there are planar graphs with arbitrarily large girth having oriented chromatic number $5$, and that every planar graph with girth at least $16$ has oriented chromatic number at most $5$. The bound on the girth was reduced to $13$~\cite{Borodin2004} (where the result generalized to graphs embeddable on the torus, or the Klein bottle)
and then to $12$ ~\cite{Borodin2007}.
In ~\cite{Borodin2004}, it is proved that the considered high-girth graphs not only have oriented chromatic number $5$, but
that every orientation of these have a homomorphism to the same $5$-vertex regular tournament $\vec{C}_5^2$. This tournament is a particular case of
the (circular) digraphs $\vec{C}_n^d$, which are defined as the digraphs  with vertex set $\mathbb{Z}_n$ and arcs
$(i,j)$ when $j-i\in{1,\dots,d}$ (mod $n$). 

The following lemma relates $p$-path degeneracy and existence of homomorphism from a directed graph to
$\vec{C}_n^d$.

\begin{lemma}[Ne\v{s}et\v{r}il, Raspaud, and Sopena \cite{Nevsetvril1997}]
\label{lem:nrs}
Then end-vertices of every oriented path $\vec P$ of length at least $(n - 1)/(d - 1)$ can be mapped by a homomorphism
$\vec P \rightarrow \vec{C}_n^d$ to any pair of (not necessarily distinct) vertices of $\vec{C}_n^d$.
\end{lemma}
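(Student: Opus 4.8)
The plan is to turn the problem into an elementary statement about sums of bounded integers. Write $\vec P = v_0 v_1 \cdots v_\ell$ with $\ell \geq (n-1)/(d-1)$, and for $1 \le i \le \ell$ set $\varepsilon_i = +1$ if the $i$-th edge is oriented from $v_{i-1}$ to $v_i$ and $\varepsilon_i = -1$ otherwise. A map $\phi \colon V(\vec P) \to \mathbb{Z}_n$ is a homomorphism $\vec P \to \vec{C}_n^d$ exactly when the increment $x_i := \phi(v_i) - \phi(v_{i-1})$ lies, as an element of $\mathbb{Z}_n$, in $\{1,\dots,d\}$ when $\varepsilon_i = +1$ and in $\{-d,\dots,-1\}$ when $\varepsilon_i = -1$. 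Lifting each $x_i$ to its representative in the corresponding interval of consecutive integers, and conversely, one sees that prescribing $\phi(v_0) = a$ and $\phi(v_\ell) = b$ for given $a, b \in \mathbb{Z}_n$ is the same as choosing integers $x_1, \dots, x_\ell$, each in its prescribed length-$d$ interval, with $x_1 + \cdots + x_\ell \equiv b - a \pmod n$; one then recovers $\phi$ by partial sums, $\phi(v_i) = a + (x_1 + \cdots + x_i) \bmod n$.

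Next I would invoke the elementary fact that the Minkowski sum of finitely many intervals of consecutive integers is again an interval of consecutive integers. Applied here, the set $S$ of attainable values of $x_1 + \cdots + x_\ell$ is exactly $\{m, m+1, \dots, M\}$, where $m$ is the sum of the left endpoints and $M$ the sum of the right endpoints of the $\ell$ increment intervals. Each such interval, whether $\{1,\dots,d\}$ or $\{-d,\dots,-1\}$, contains $d$ consecutive integers, so $M - m = (d-1)\ell$ and hence $|S| = (d-1)\ell + 1$.

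Finally, the hypothesis $\ell \geq (n-1)/(d-1)$ (note $d \geq 2$) gives $(d-1)\ell \geq n - 1$, so $|S| = (d-1)\ell + 1 \geq n$; a block of $n$ or more consecutive integers meets every residue class modulo $n$, so $S$ contains some integer congruent to $b - a \pmod n$. Picking increments realizing it and forming the partial sums yields the desired homomorphism with the two prescribed end-values. The proof has essentially no hard step: the only point that deserves an explicit word is that collisions among the $\phi(v_i)$ are irrelevant, since homomorphisms need not be injective and $\vec{C}_n^d$ has no loops when $1 \le d < n$; the entire content of the lemma is the inequality $(d-1)\ell + 1 \ge n$.
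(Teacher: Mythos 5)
Your argument is correct: the reduction of the homomorphism condition to choosing integer increments $x_i\in\{1,\dots,d\}$ or $\{-d,\dots,-1\}$ with $\sum x_i\equiv b-a\pmod n$, the observation that the attainable sums form a block of $(d-1)\ell+1$ consecutive integers, and the final count $(d-1)\ell+1\geq n$ together give a complete proof (with the implicit and harmless standing assumption $2\leq d<n$). The paper itself only cites this lemma from Ne\v{s}et\v{r}il--Raspaud--Sopena without proof, and your reasoning is essentially the standard one behind it (the set of reachable end-values spreads by $d-1$ per edge until it covers all of $\mathbb{Z}_n$), so there is nothing to correct.
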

We can now relate expansion properties of a class to the oriented chromatic number of large girth graphs in the class.
\begin{theorem}
\label{thm:orient1}
Let $\mathcal C$ be a class of graphs, and let $n,d\in\bbbn$.
Assume it holds
 $$
 \frac{\log\Ex(r)}{r}\leq \frac{d-1}{5(n-1)}.
 $$

Then there is an integer $g$ such that for every orientation $\vec{G}$ of a graph $G$ in $\mathcal C$ with girth at least $g$ there exists a homomorphism
$\vec{G}\rightarrow \vec{C}_n^d$.
\end{theorem}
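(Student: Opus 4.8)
The plan is to mirror the proof of the circular-chromatic version (the theorem just before Corollary~\ref{cor:circ}), replacing the Bondy--Hell characterization and Lemma~\ref{lem:ggh} by Lemma~\ref{lem:nrs}. First I would choose the path-degeneracy parameter $p$ so that a $p$-path degenerate graph has the homomorphism property we want. By Lemma~\ref{lem:nrs}, the end-vertices of any oriented path of length at least $(n-1)/(d-1)$ can be sent anywhere in $\vec C_n^d$; so if every internal path used in a path-degeneracy sequence has length at least $p\geq (n-1)/(d-1)$, one can build a homomorphism $\vec G\rightarrow\vec C_n^d$ by reversing the reduction sequence: start from a homomorphism of the final forest (forests admit homomorphisms to any digraph with a loop, or more simply to $\vec C_n^d$ since $d\geq 1$ gives a closed walk of every length), and at each step extend across the deleted path using Lemma~\ref{lem:nrs} with the two already-coloured endpoints as the prescribed pair. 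Hence it suffices to take $p=\lceil (n-1)/(d-1)\rceil$, so that $1/(5p)\geq (d-1)/(5(n-1))$ up to the ceiling, i.e. $\log\Ex(r)/r\leq (d-1)/(5(n-1))\leq 1/(5p)$.

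Next I would invoke Lemma~\ref{lem:2}: given the hypothesis $\log\Ex(r)/r\leq (d-1)/(5(n-1))$, pick $r$ realizing this bound (here the hypothesis is stated for a fixed $r$, so no $\liminf$ extraction is needed, unlike in Corollary~\ref{cor:circ}), set $g=\lceil 3r/p\rceil$, and conclude from Lemma~\ref{lem:2} that every $G\in\mathcal C$ with girth at least $g$ is $p$-path degenerate. Then the argument of the previous paragraph gives, for every orientation $\vec G$ of such a $G$, a homomorphism $\vec G\rightarrow\vec C_n^d$. One should double-check the edge cases $d=1$ (where the statement is vacuous or trivial, since $\vec C_n^1$ is just a directed cycle and the bound $(n-1)/(d-1)$ is infinite) and the base case of the reduction, where the forest $G_t$ itself must be mapped into $\vec C_n^d$; since every tree maps to any digraph containing a directed closed walk through every residue (which $\vec C_n^d$ does for $d\geq 2$), this is fine, and isolated edges are handled by Lemma~\ref{lem:nrs} with a path of length $1$ provided $p=1$, otherwise one notes such short paths do not arise in a genuine $p$-path-degeneracy certificate.

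The one genuinely delicate point — and the step I expect to be the main obstacle — is the inductive reconstruction of the homomorphism from the path-degeneracy sequence. The definition of $p$-path degeneracy deletes \emph{internal} vertices of a long path of degree-two vertices; when we go backwards, $G_{i-1}$ is obtained from $G_i$ by re-inserting such a path, whose two endpoints already lie in $G_i$ and are therefore already coloured by the homomorphism $\vec G_i\rightarrow\vec C_n^d$ obtained inductively. Lemma~\ref{lem:nrs} applies to the \emph{oriented} path induced by the relevant orientation of $\vec G$ restricted to that path, and it produces a homomorphism of that oriented path agreeing with the two prescribed endpoint colours; since the internal vertices have degree two in $G_{i-1}$ they have no other constraints, so the colourings glue to a homomorphism $\vec G_{i-1}\rightarrow\vec C_n^d$. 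Care is needed that the path has length at least $p\geq (n-1)/(d-1)$, which is exactly the path-degeneracy hypothesis with our choice of $p$, and that Lemma~\ref{lem:nrs} is insensitive to the internal orientation pattern of the path (which it is, as stated). Everything else is the same bookkeeping as in the circular case, so I would keep the write-up short and refer back to that proof for the parallel steps.

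\begin{proof}
Let $p=\lceil (n-1)/(d-1)\rceil$ if $d\geq 2$ (if $d=1$ the conclusion is immediate, as $\vec C_n^1$ is a directed $n$-cycle and the hypothesis forces $\Ex$ bounded, reducing to the classical case; we assume $d\geq 2$ below). Then
$$
\frac{\log\Ex(r)}{r}\leq \frac{d-1}{5(n-1)}\leq \frac{1}{5p},
$$
so Lemma~\ref{lem:2}, applied with this $p$ and with $g=\lceil 3r/p\rceil$ for the relevant $r$, shows that every graph $G\in\mathcal C$ with girth at least $g$ is $p$-path degenerate.

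Fix such a $G$, an orientation $\vec G$ of $G$, and a path-degeneracy sequence $G=G_0,G_1,\dots,G_t$ with $G_t$ a forest. We construct homomorphisms $f_i\colon \vec G_i\rightarrow \vec C_n^d$ by downward induction on $i$ (from $i=t$ to $i=0$), where $\vec G_i$ denotes the orientation of $G_i$ inherited from $\vec G$. For $i=t$: since $d\geq 2$, the digraph $\vec C_n^d$ contains a directed closed walk visiting every element of $\mathbb Z_n$, hence every tree (and thus every forest) admits a homomorphism to $\vec C_n^d$; let $f_t$ be such a homomorphism for $\vec G_t$. For the inductive step, suppose $f_i$ is defined. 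The graph $G_{i-1}$ is obtained from $G_i$ by re-inserting the internal vertices of a path $P$ of length at least $p$ whose endpoints $u,v$ lie in $G_i$; all internal vertices of $P$ have degree two in $G_{i-1}$. Let $\vec P$ be the orientation of $P$ in $\vec G$. Since $P$ has length at least $p\geq (n-1)/(d-1)$, Lemma~\ref{lem:nrs} yields a homomorphism $\vec P\rightarrow \vec C_n^d$ mapping $u$ to $f_i(u)$ and $v$ to $f_i(v)$. As the internal vertices of $P$ have no neighbours in $G_{i-1}$ outside $P$, this homomorphism agrees with $f_i$ on $V(G_i)$ and extends it to a homomorphism $f_{i-1}\colon \vec G_{i-1}\rightarrow \vec C_n^d$. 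Taking $i=0$ gives the desired homomorphism $\vec G\rightarrow \vec C_n^d$.
\end{proof}
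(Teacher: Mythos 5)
Your proof is correct and follows essentially the same route as the paper: the paper's own proof is exactly the two-step argument ``Lemma~\ref{lem:2} gives $(n-1)/(d-1)$-path degeneracy, then iterate Lemma~\ref{lem:nrs} along the reversed degeneracy sequence,'' and your write-up merely fills in the induction that the paper dismisses as ``easily follows,'' including the base case on the forest. One small quibble: with $p=\lceil (n-1)/(d-1)\rceil$ the inequality $\frac{d-1}{5(n-1)}\leq\frac{1}{5p}$ goes the wrong way whenever $d-1$ does not divide $n-1$; this is harmless, since one can apply Lemma~\ref{lem:2} directly with the (possibly rational) parameter $(n-1)/(d-1)$ exactly as the paper does --- its proof never uses integrality of $p$, and because path lengths are integers, being $(n-1)/(d-1)$-path degenerate is the same as being $\lceil(n-1)/(d-1)\rceil$-path degenerate.
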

\begin{proof}
Under the assumptions of the theorem there exists, according to Lemma~\ref{lem:2} an integer $g$ such that
every graph $G\in\mathcal{C}$ with girth at least $g$ is $(n-1)/(d-1)$-path degenerate. For every orientation $\vec{G}$ of
$G$, the existence of a homomorphism $f:\vec{G}\rightarrow\vec{C}_n^d$ the easily follows by an iterative use
of Lemma~\ref{lem:nrs}.
\end{proof}
From this theorem immediately follows:
\begin{theorem}
Let $\mathcal C$ be a class of graphs such that
 $$
\limsup_{r\rightarrow\infty}\frac{\log\Ex(r)}{r}\leq \frac{1}{20}
$$
Then there is an integer $g$ such that every 
$G\in\mathcal C$ with girth at least $g$ has
oriented chromatic number at most $5$ (and actually are homomorphic to the tournament $\vec{C}_5^2$).
\end{theorem}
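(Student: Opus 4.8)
The plan is to derive this corollary-type statement directly from Theorem~\ref{thm:orient1} by specializing the parameters to $n=5$ and $d=2$. First I would observe that the hypothesis $\limsup_{r\to\infty}\log\Ex(r)/r\leq 1/20$ means that for all sufficiently large $r$ we have $\log\Ex(r)/r\leq 1/20 + o(1)$; more carefully, since $\Ex$ is non-decreasing in the sense relevant here, I want a single value of $r$ for which the bound $\log\Ex(r)/r\leq \frac{d-1}{5(n-1)}$ of Theorem~\ref{thm:orient1} holds. With $n=5$ and $d=2$ this target is $\frac{1}{20}$, which matches the hypothesis exactly in the limsup; so I would pick $r$ large enough that $\log\Ex(r)/r$ is at most $\frac{1}{20}$ (strictly, one may need to absorb an $\epsilon$, but since the statement only needs \emph{some} $g$ to exist and $\vec C_5^2$ is the target, a limsup bound of $1/20$ suffices once we recall that $\Ex$ is monotone and the inequality in Lemma~\ref{lem:2} is used with $r=\lceil pg/3\rceil$ which can be taken arbitrarily large).

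Next I would invoke Theorem~\ref{thm:orient1} with these parameters: it yields an integer $g$ such that every orientation $\vec G$ of every $G\in\mathcal C$ with girth at least $g$ admits a homomorphism $\vec G\to\vec C_5^2$. Then I would simply recall the definitions: $\vec C_5^2$ is a digraph on $5$ vertices (it is in fact the unique regular tournament on $5$ vertices, with arc set $\{(i,i+1),(i,i+2)\bmod 5\}$), so a homomorphism of every orientation of $G$ into $\vec C_5^2$ witnesses $\chi_o(G)\leq 5$ by the very definition of the oriented chromatic number recalled at the start of Section~4. This gives both halves of the conclusion at once: the oriented chromatic number is at most $5$, and moreover the common target digraph can be taken to be $\vec C_5^2$.

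The only genuine subtlety—hence the step I expect to need the most care—is the passage from the $\limsup$ hypothesis to the hypothesis of Theorem~\ref{thm:orient1}, which is stated for a fixed $r$. I would handle this by noting that the proof of Theorem~\ref{thm:orient1} really only needs the inequality $\log\Ex(r)/r\leq \frac{d-1}{5(n-1)}$ to hold for \emph{one} suitable $r$ (namely one of the form $\lceil pg/3\rceil$ for the relevant $p=(n-1)/(d-1)=4$), because Lemma~\ref{lem:2} is applied at that single scale; and a $\limsup$ bound of $\frac{1}{20}$ together with the fact that we have freedom to choose $g$ (and hence $r$) as large as we like lets us find such an $r$. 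If one is worried about the boundary case (equality $1/20$ versus a strict inequality), the cleanest fix is to replace the appeal to Theorem~\ref{thm:orient1} by a direct appeal to Lemma~\ref{lem:2} with $p=4$, observing that $\frac{1}{5p}=\frac{1}{20}$, so that a $\liminf$- or $\limsup$-type bound of $\frac{1}{20}$ produces arbitrarily large $r$ with $\log\Ex(r)/r\leq\frac{1}{20}$, whence $4$-path-degeneracy of the large-girth graphs in $\mathcal C$, whence via Lemma~\ref{lem:nrs} (with $(n-1)/(d-1)=4$) the desired homomorphism to $\vec C_5^2$.

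\begin{proof}
Apply Theorem~\ref{thm:orient1} with $n=5$ and $d=2$; then $\frac{d-1}{5(n-1)}=\frac1{20}$, and the hypothesis $\limsup_{r\to\infty}\log\Ex(r)/r\leq \frac1{20}$ guarantees that the required inequality $\log\Ex(r)/r\leq\frac1{20}$ holds for arbitrarily large $r$, in particular for some $r$ of the form $\lceil pg/3\rceil$ with $p=(n-1)/(d-1)=4$, as used in the proof via Lemma~\ref{lem:2}. We obtain an integer $g$ such that for every orientation $\vec G$ of every graph $G\in\mathcal C$ with girth at least $g$ there is a homomorphism $\vec G\to\vec C_5^2$. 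Since $\vec C_5^2$ is a simple digraph on $5$ vertices, the definition of the oriented chromatic number gives $\chi_o(G)\leq 5$ for every such $G$, and moreover all orientations of all these graphs map homomorphically into the single tournament $\vec C_5^2$.
\end{proof}
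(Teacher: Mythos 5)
Your proof is correct and takes essentially the same route as the paper, whose entire proof is the one-line observation that the statement follows from Theorem~\ref{thm:orient1} with $d=2$ and $n=5$. Your extra care in passing from the $\limsup$ hypothesis to the fixed-$r$ inequality needed in Theorem~\ref{thm:orient1} (via the freedom to choose $r=\lceil pg/3\rceil$ large) is a reasonable tightening of a point the paper glosses over, but it does not change the approach.
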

\begin{proof}
This follows from Theorem~\ref{thm:orient1}, by considering $d=2$ and $n=5$.
\end{proof}
In particular, we have
\begin{corollary}
Let $\mathcal C$ be a proper minor closed class of graphs.
Then there is an integer $g$ such that every 
$G\in\mathcal C$ with girth at least $g$ 
has $\chi_o(G)\leq 5$.
\end{corollary}
\section{Centered Coloring of Graphs with Large Girth}
\label{sec:chip}
Recall that the {\em tree-depth} ${\rm td}(G)$ of a graph $G$ is the minimum height of a rooted forest $F$ such that
$G$ is the subgraph of the closure of $F$.
For positive integer $p$, the {\em $p$th chromatic number} $\chi_p(G)$ is the least number of colors in a vertex coloring of $G$ such that every $i\leq p$ colors induce a subgraph with tree-depth at most $i$.
A {\em $p$-centered coloring} of a graph $G$ is a vertex coloring such that, for any (induced) connected subgraph $H$ , either some color $c(H)$ appears exactly once in $H$, or $H$ gets at least $p$ colors.
For detailed properties of tree-depth and $\chi_p$ we refer the reader to \cite{Sparsity}.
The $\chi_p$ invariants  are related to $p$-centered coloring as follows.
\begin{lemma}[Ne{\v s}et{\v r}il and Ossona de Mendez \cite{Taxi_tdepth}]
For every graph $G$ and every integer $p$, $\chi_p(G)$ is the minimum number of colors in a $(p+1)$-centered coloring of $G$.
\end{lemma}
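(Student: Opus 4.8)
Write $M(G)$ for the least number of colors in a $(p+1)$-centered coloring of $G$; the assertion is $M(G)=\chi_p(G)$. I would treat the two inequalities separately, using as a black box the classical fact (see \cite{Sparsity}) that $\mathrm{td}(H)$ equals the least number of colors in a coloring of a graph $H$ in which every connected subgraph contains a vertex whose color occurs exactly once on that subgraph.

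For $\chi_p(G)\le M(G)$: take a $(p+1)$-centered coloring $c$ of $G$ with $M(G)$ colors and check that it is admissible for $\chi_p$. Fix a set $S$ of $i\le p$ colors and set $G_S=G[c^{-1}(S)]$. Every connected subgraph of $G_S$ uses at most $i\le p<p+1$ colors, so by $(p+1)$-centeredness it contains a vertex of unique color; hence $c$ restricted to $G_S$ is a coloring with at most $i$ colors in which every connected subgraph has a uniquely colored vertex, and therefore $\mathrm{td}(G_S)\le i=|S|$. Thus $c$ witnesses $\chi_p(G)\le M(G)$.

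For $M(G)\le\chi_p(G)$ I would build a $(p+1)$-centered coloring from an optimal $\chi_p$-coloring $c$, with color classes $V_1,\dots,V_N$ ($N=\chi_p(G)$), so that $\mathrm{td}(G[\bigcup_{i\in S}V_i])\le|S|$ whenever $|S|\le p$. Note that $c$ itself need not be $(p+1)$-centered (already on long paths it is not), so genuine recoloring is needed. I would induct on $N$ and, handling connected components separately, assume $G$ connected. If $N\le p$, then taking $S$ to be the whole set of colors gives $\mathrm{td}(G)=\mathrm{td}(G[\bigcup_iV_i])\le N$, so $G$ has a coloring with at most $N$ colors in which every connected subgraph has a uniquely colored vertex; such a coloring is in particular $(p+1)$-centered, which settles the base case. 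For $N>p$ the plan is to peel off a well-chosen vertex set $W$, color it with a single color, color $G-W$ by the induction hypothesis, and verify that the resulting coloring is $(p+1)$-centered; $W$ would be chosen as a transversal of ``root'' vertices in the rooted forests of height at most $p$ supplied by the $\chi_p$-condition for the $p$-element sets of colors, and one checks, component by component of $G-W$, that no connected subgraph using at most $p$ colors of the new coloring can fail to contain a uniquely colored vertex.

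The step I expect to be the main obstacle is exactly this verification: one must choose $W$ so that it is safe to give $W$ a color already used on $G-W$---that is, so that no connected subgraph meeting $W$ uses at most $p$ colors without having some other color occur exactly once---and this is precisely where the margin between ``$p$ colors'' and ``$p+1$ colors'' in the centered condition is consumed, and where one has to control the fact that a single connected subgraph may meet several components of $G-W$. The rest of the argument is bookkeeping.
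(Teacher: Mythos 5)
Your first inequality is fine: restricting a $(p+1)$-centered coloring to the union of any $i\le p$ classes yields a coloring with at most $i$ colors in which every connected subgraph has a uniquely colored vertex, and the centered-coloring characterization of tree-depth then gives $\mathrm{td}\le i$, hence $\chi_p(G)\le M(G)$. (Note in passing that the paper does not prove this lemma at all; it quotes it from \cite{Taxi_tdepth}, so there is no in-paper argument to compare with.)

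The other inequality, $M(G)\le\chi_p(G)$, is where the real content of the lemma sits, and there your text is a plan rather than a proof: you yourself flag the decisive verification (connected subgraphs meeting $W$ in two or more vertices and stitching together several components of $G-W$) as an unresolved obstacle, and that gap is genuine. Your instinct that honest recoloring is needed is correct; a concrete witness is $P_7$ with $p=3$, colored along the path $3,1,2,3,2,1,3$: each class is independent, each pair of classes induces a matching or a star (tree-depth $\le 2$), and the whole path has tree-depth $3$, so this is an optimal $\chi_3$-coloring with $3$ colors, yet the path itself receives $3<4$ colors with no color occurring exactly once, so the coloring is not $4$-centered. Thus the hard inequality cannot be had by reusing the given coloring, and everything rests on the recoloring scheme you leave open. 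Moreover, that scheme is under-determined as stated: the induction is on the number $N$ of colors, but nothing guarantees that $G-W$ satisfies the $\chi_p$-condition with fewer than $N$ colors (the restriction of $c$ to $G-W$ may still use all $N$ of them), so it is unclear to what the induction hypothesis applies; and $W$ is described as a transversal of roots of the tree-depth forests attached to the $p$-element color sets, but those forests depend on the chosen color set and need not share roots, so $W$ is not well defined, while giving all of $W$ a single reused color is exactly the step whose safety you cannot yet certify. As it stands you have proved only the easy direction; for the converse you should follow the argument in \cite{Taxi_tdepth} (see also \cite{Sparsity}), where the statement is actually established.
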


\begin{lemma}
\label{lem:chip}
Let $G$ be a graph and let $p$ be a positive integer. If $G$ is $2p$-path degenerate, then $G$ has a $p$-centered coloring
with at most $p+1$ colors.
\end{lemma}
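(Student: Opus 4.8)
The plan is to argue by induction on the length $t$ of a path‑degeneracy certificate $G=G_0,G_1,\dots,G_t$ witnessing that $G$ is $2p$‑path degenerate, producing a $p$‑centered coloring with color set $\mathbb{Z}_{p+1}$. I may assume each $G_i$ is an induced subgraph of $G$.

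For the base case $G=G_t$ is a forest. Rooting each component, I would color every vertex $v$ by $\operatorname{depth}(v)\bmod (p+1)$. To see this is $p$‑centered, let $H$ be a connected induced subgraph; then $H$ is a subtree, and since the path in $H$ between any two of its vertices passes through their lowest common ancestor (which lies in $H$), the depths occurring in $H$ form an interval $[a,b]$ and $H$ has a unique vertex $r$ at depth $a$. If $b-a\ge p$ then $H$ meets $p+1$ consecutive depths and hence uses all $p+1\ge p$ colors; if $b-a\le p-1$ then the difference between $\operatorname{depth}(r)=a$ and the depth of any other vertex of $H$ lies in $\{1,\dots,p-1\}$, so is not divisible by $p+1$, and the color of $r$ occurs exactly once in $H$. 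Either way the $p$‑centered condition holds with $p+1$ colors.

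For the inductive step, $G_{i-1}$ is obtained from $G_i$ by inserting a path $P\colon u=x_0,x_1,\dots,x_\ell=w$ of length $\ell\ge 2p$ all of whose internal vertices $x_1,\dots,x_{\ell-1}$ have degree $2$ in $G_{i-1}$; by induction $G_i$ carries a $p$‑centered $(p+1)$‑coloring $c$, which I must extend to the $x_j$. The intended rule is to prolong along $P$ the same ``shifted depth'' pattern used in the base case, reading it from $u$ along a first stretch of $P$ and from $w$ along a last stretch: set $c(x_j)=c(u)+j$ for the first roughly $\ell/2$ internal vertices and $c(x_j)=c(w)+(\ell-j)$ for the last ones (arithmetic in $\mathbb{Z}_{p+1}$), adjusting the one or two vertices where the two patterns meet, and the last vertex $x_{\ell-1}$, so as to keep the coloring proper; since $p+1\ge 3$ there is always room for this. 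The role of the hypothesis $\ell\ge 2p$ is that each of these two stretches can be taken to contain at least $p$ internal vertices, so each displays all $p$ values of a length‑$p$ window of the pattern.

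It then remains to verify that the extended coloring is $p$‑centered, and this is the step I expect to be the real obstacle. If $V(H)\subseteq V(G_i)$ the inductive hypothesis applies, and if $V(H)\subseteq V(P)$ the base‑case analysis of the pattern along a path applies. Otherwise $H$ meets the interior of $P$ and, because internal vertices of $P$ have degree $2$, its trace on $V(P)$ is either a single subpath attached to $G_i$ at $u$, at $w$, or at both, or two subpaths, one at $u$ and one at $w$, joined through $H\cap G_i$; in the second situation, and whenever the trace contains an entire length‑$p$ stretch of the pattern, $H$ already carries $\ge p$ colors. The delicate case is when the trace is a \emph{short} segment hanging off an endpoint, say $x_1,\dots,x_b$ off $u$ with $b<p$: one must then ensure that these $b$ new colors cannot destroy, inside $H\cap G_i$, the color that the inductive hypothesis guarantees occurs exactly once there. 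This is exactly where the margin built into ``$2p$'' rather than ``$p$'' is needed, and where one has to exploit the freedom in choosing the first colors of $P$ — for instance by reserving for a vertex near $u$ a color that does not reappear in the relevant part of $G_i$ — so it is the part of the argument requiring the most care.
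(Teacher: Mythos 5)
Your proof is not complete: the inductive step is exactly where the lemma lives, and you leave it open. You say yourself that the ``delicate case'' --- a connected subgraph $H$ consisting of a short initial segment $x_1,\dots,x_b$ of the new path ($b<p$) together with a piece of $G_i$ through $u$ --- is ``the part of the argument requiring the most care'', and you only gesture at ``reserving for a vertex near $u$ a color that does not reappear in the relevant part of $G_i$''. This cannot be done locally: the color that the inductive hypothesis guarantees to occur exactly once in $H\cap G_i$ depends on $H$, and different connected subgraphs of $G_i$ containing $u$ (each with at most $p-1$ colors) may each have a different, and in fact every possible, color as their unique one; then no choice of $c(x_1)$ is safe. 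So the inductive hypothesis ``$G_i$ has \emph{some} $p$-centered $(p+1)$-coloring'' is too weak to be extended along the inserted path, and the induction as set up does not close. (Your base case, depth modulo $p+1$ on a rooted forest, is fine.)

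The paper avoids this by a single global construction rather than an induction on the degeneracy sequence. On each deleted path $P_i$ (of length at least $2p$) it picks one internal vertex $v_i$ at distance at least $p$ from both endpoints of $P_i$ --- this is where the hypothesis $2p$ rather than $p$ is used, matching your ``two stretches of length $p$'' intuition --- and gives all the $v_i$ one dedicated extra color $p$. Deleting $C=\{v_i\}$ leaves a rooted forest (each half-path hangs off the rest), which is colored by depth modulo $p$ with colors $0,\dots,p-1$. Then any path between two vertices of $C$ already shows at least $p-1$ colors on its interior, so a connected subgraph $H$ with fewer than $p$ colors meets $C$ in at most one vertex: either that vertex is the unique one colored $p$, or $H$ lies in the forest and its minimum-depth vertex is uniquely colored. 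The crucial structural invariant (a depth coloring of a forest plus one special color class hit at most once by any low-color connected subgraph) is precisely what your argument is missing; without it, or some comparably strong strengthening of your inductive hypothesis, the extension step fails.
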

\begin{proof}
According to the definition of  $p$-degeneracy,  there is a sequence
$$G=G_0\supset G_1\supset\dots\supset G_t$$
 of subgraphs of $G$ such that $G_t$ is
a forest, and each $G_i$ ($i>0$) is obtained from $G_{i-1}$ by deleting the
internal vertices of a path $P_i$ of length at least $p$, all of degree two.
For each $0<i\leq t$ select an internal vertex $v_i$ of $P_i$ at distance at least $p$ 
(in $P_i$) from both endvertices of $P_i$. and orient all the edges of $P_i$ toward $v_i$.
Let $C=\{v_i: 0<i\leq t\}$. Then $G-C$ is a rooted forest, with edges oriented from the roots.
Color the vertices $G-C$ with $p$ colors inductively as follows: each root is colored $0$, and each other vertex $u$
is colored as $(c+1) \bmod p$, where $c$ is the color assigned to the father of $u$. Note that this coloring is
obviously a $p$-centered coloring of $G-C$. Now color the vertices in $C$ with color $p$. It is easily checked that
every path linking two vertices in $C$ already got at least $p-1$ colors on the internal vertices  by the above coloring  
of $G-C$. It follows that the every $(p-1)$-colored connected component of $G$ is either included in $G-C$ (hence
has a uniquely colored vertex), or contains exactly one vertex in $C$, that is exactly one vertex colored $p$. Thus
the defined coloring is a $p$-centered coloring of $G$.
\end{proof}

The following theorem now easily follows.
\begin{theorem}
\label{thm:chip}
Let $\mathcal C$ be a class of graphs, and let $p\in\bbbn$.
Assume it holds
 $$
 \frac{\log\Ex(r)}{r}\leq \frac{1}{10(p+1)}.
 $$

Then there is an integer $g$ such that every  graph $G$ in $\mathcal C$ with girth at least $g$ has $\chi_p(G)\leq p+2$.
\end{theorem}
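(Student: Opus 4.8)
The plan is to combine Lemma~\ref{lem:2} with Lemma~\ref{lem:chip} in exactly the same pattern as the proof of the circular chromatic number theorem in Section~\ref{sec:circ}. The target conclusion is that $G$ has a $p$-centered coloring with at most $p+1$ colors; by the Ne\v{s}et\v{r}il--Ossona de Mendez lemma relating $\chi_p$ and $(p+1)$-centered colorings, such a coloring witnesses $\chi_p(G) \leq p+1$, which is even slightly stronger than the claimed $\chi_p(G) \leq p+2$ (the extra slack presumably absorbs an off-by-one in indexing conventions). So the real content is to arrange for $G$ to be $2p$-path degenerate, which is the hypothesis of Lemma~\ref{lem:chip}.

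First I would set $q = 2p$, so that the desired degeneracy parameter matches what Lemma~\ref{lem:2} delivers. Then I would observe that the hypothesis
$$
\frac{\log\Ex(r)}{r} \leq \frac{1}{10(p+1)}
$$
must be massaged into the form $\frac{\log\Ex(r)}{r} \leq \frac{1}{5q}$ required by Lemma~\ref{lem:2} with $q$ in the role of $p$; since $\frac{1}{5q} = \frac{1}{10p}$ and $\frac{1}{10(p+1)} \leq \frac{1}{10p}$, the implication goes through for every $r$, and in particular for $r = \lceil qg/3 \rceil$ with $g$ chosen as in the statement of Lemma~\ref{lem:2}. (As in Section~\ref{sec:circ}, one picks an appropriate $r$ first and then sets $g = \lceil 3r/q \rceil$, or equivalently invokes Lemma~\ref{lem:2} directly with the pair $(q, g)$.) Applying Lemma~\ref{lem:2} then yields an integer $g$ such that every $G \in \mathcal C$ with girth at least $g$ is $q$-path degenerate, i.e. $2p$-path degenerate.

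Second, I would feed this into Lemma~\ref{lem:chip}: a $2p$-path degenerate graph has a $p$-centered coloring with at most $p+1$ colors, hence $\chi_p(G) \leq p+1 \leq p+2$ by the characterization of $\chi_p$ via centered colorings. This closes the argument. The only place requiring any care is the bookkeeping between the two meanings of ``$p$'': the parameter $p$ in the theorem statement is the index of $\chi_p$, whereas the parameter appearing in Lemma~\ref{lem:2} is the path length and must be taken equal to $2p$ — getting this substitution right is what makes the constant $\frac{1}{10(p+1)}$ come out (with room to spare, since $\frac{1}{5\cdot 2p} = \frac{1}{10p} \geq \frac{1}{10(p+1)}$). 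I do not expect any genuine obstacle; the proof is a routine concatenation of the preceding lemmas, and indeed the excerpt's own phrasing (``The following theorem now easily follows'') signals as much.
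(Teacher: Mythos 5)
Your reduction has a genuine off-by-one error at the point where you convert the centered coloring into a bound on $\chi_p$, and it is not harmless slack. The lemma of Ne\v{s}et\v{r}il and Ossona de Mendez states that $\chi_p(G)$ equals the minimum number of colors in a \emph{$(p+1)$-centered} coloring of $G$. Taking $q=2p$ in Lemma~\ref{lem:2} and applying Lemma~\ref{lem:chip} as stated only produces a \emph{$p$-centered} coloring with at most $p+1$ colors; since $p$-centered colorings are the weaker notion, this bounds $\chi_{p-1}(G)$, not $\chi_p(G)$. Your claimed conclusion $\chi_p(G)\leq p+1$ is in fact false in general: for $p=1$ we have $\chi_1=\chi$, and long odd cycles (which lie in proper minor-closed classes and have arbitrarily large girth) satisfy $\chi_1=3>p+1$ — this is exactly the tightness example mentioned after the corollary.

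The paper's proof avoids this by shifting the index before invoking the lemmas: under the stated hypothesis $\frac{\log\Ex(r)}{r}\leq\frac{1}{10(p+1)}=\frac{1}{5(2p+2)}$, Lemma~\ref{lem:2} is applied with path parameter $2p+2$, so every large-girth $G\in\mathcal{C}$ is $(2p+2)$-path degenerate; then Lemma~\ref{lem:chip} with $p+1$ in place of $p$ gives a $(p+1)$-centered coloring with at most $p+2$ colors, and the characterization of $\chi_p$ via $(p+1)$-centered colorings yields $\chi_p(G)\leq p+2$. In other words, the constant $\frac{1}{10(p+1)}$ is not ``room to spare'' absorbing an indexing convention — it is exactly what is needed for the degeneracy parameter $2(p+1)$, and the bound $p+2$ (rather than $p+1$) is forced by the centered-coloring indexing. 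Your argument can be repaired simply by making this substitution $p\mapsto p+1$ throughout; as written, it proves a statement about $\chi_{p-1}$ and overclaims about $\chi_p$.
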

\begin{proof}
According to Lemma~\ref{lem:2} there is an integer $g$ such that
every graph $G\in\mathcal{C}$ with girth at least $g$ is $(2p+2)$-path degenerate thus,
 according to Lemma~\ref{lem:chip}, such that $\chi_p(G)\leq p+2$.
\end{proof}
In particular, we have
\begin{corollary}
Let $\mathcal C$ be a proper minor closed class of graphs.
Then for every integer $p$ there is an integer $g$ such that every 
$G\in\mathcal C$ with girth at least $g$ 
has $\chi_p(G)\leq p+2$.
\end{corollary}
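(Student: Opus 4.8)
The plan is to obtain the corollary as a direct specialization of Theorem~\ref{thm:chip}, the point being that a proper minor closed class satisfies a far stronger expansion bound than the hypothesis of that theorem. First I would recall the equivalence stated in the introduction: a class $\mathcal C$ has uniformly bounded expansion, that is $\Ex(r)\le C$ for some constant $C$ independent of $r$, if and only if there is a graph $F$ that is a minor of no graph in $\mathcal C$. Since $\mathcal C$ is proper and closed under taking minors, such an excluded minor exists, and hence there is a constant $C=C(\mathcal C)$ with $\Ex(r)\le C$ for every $r\in\bbbn$.

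Now fix a positive integer $p$. Then $\frac{\log\Ex(r)}{r}\le\frac{\log C}{r}$, which tends to $0$ as $r\to\infty$, so there is an integer $r_0$ with $\frac{\log\Ex(r_0)}{r_0}\le\frac{1}{10(p+1)}$. This is exactly the hypothesis of Theorem~\ref{thm:chip}, so applying that theorem yields an integer $g$ such that every graph $G\in\mathcal C$ with girth at least $g$ has $\chi_p(G)\le p+2$, as claimed.

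I do not expect any genuine obstacle. The only point worth a word of care is that the expansion condition in Theorem~\ref{thm:chip} needs to be verified only at a single, suitably chosen scale $r$ --- which a uniform bound trivially supplies --- after which its internal mechanism (Lemma~\ref{lem:2} with $2p+2$ in place of $p$ to get $(2p+2)$-path degeneracy, then Lemma~\ref{lem:chip} to build a $(p+1)$-centered coloring with $p+2$ colors, then the characterization of $\chi_p$ via centered colorings) runs without change. If an explicit girth threshold is desired, one traces the constants in those lemmas: it suffices to take $r_0$ with $\log C\le r_0/(10(p+1))$ and then $g=\lceil 3r_0/(2p+2)\rceil$, so that $r=\lceil(2p+2)g/3\rceil\ge r_0$ satisfies the hypothesis of Lemma~\ref{lem:2}.
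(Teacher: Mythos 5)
Your proposal is correct and follows essentially the same route as the paper, which obtains the corollary as an immediate specialization of Theorem~\ref{thm:chip} using the fact that a proper minor closed class has uniformly bounded expansion, so the hypothesis $\log\Ex(r)/r\leq 1/(10(p+1))$ holds at a suitable scale $r$. Your extra tracing of the constants ($r_0$ with $\log C\le r_0/(10(p+1))$ and $g=\lceil 3r_0/(2p+2)\rceil$) is a harmless refinement of what the paper leaves implicit.
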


As $\chi_p(P_n)=p+1$ for sufficiently large $n$, it follows that the upper bound cannot be lowered under $p+1$.
The bounds $\chi_1(G)\leq 3$ and $\chi_2(G)\leq 4$ are
optimal, as witnessed by a long odd cycle and the series-parallel graphs depicted below, respectively.
\vspace{5mm}
\begin{center}
\includegraphics[scale=.4]{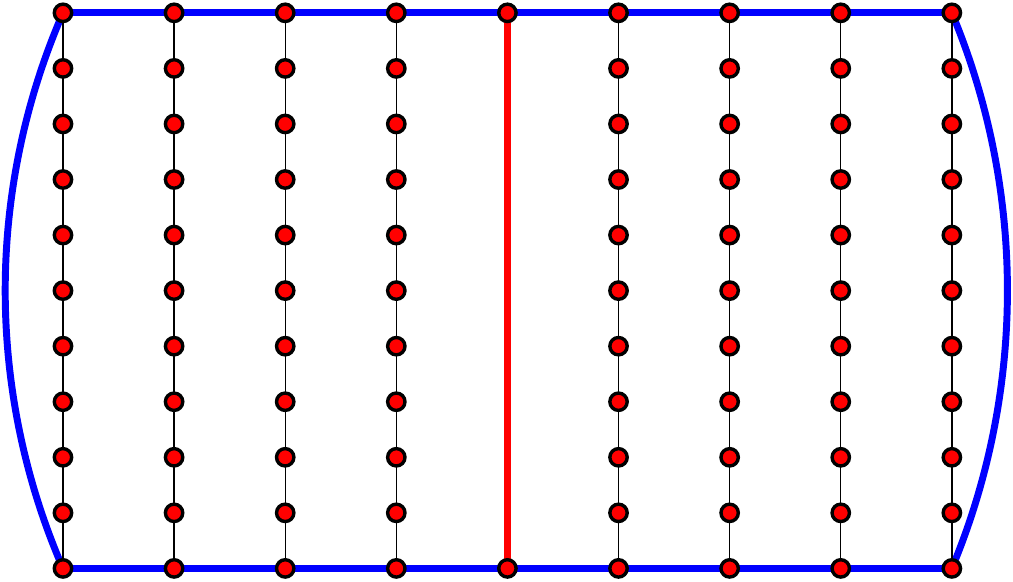}
\end{center}
We leave as a problem the question whether the bound $p+2$ is tight for every integer $p$. 
\section{Concluding Remarks}
Remark that the property of a class that high girth graphs in the class are nearly bipartite or that they have oriented chromatic number at most $5$
is not ``topological'' in the sense that satisfaction (or non-satisfaction) of this property is not preserved when the graphs in the class are subdivided:
Consider an arbitrary class of graphs $\mathcal C$.  Construct the class $\mathcal C'$ as follows: for each graph $G\in\mathcal C$ (with $|G|$ vertices)
put in $\mathcal C'$ the $|G|$-subdivision of $G$. Then the class $\mathcal C'$ has the property that high girth graphs in the class are 
nearly bipartite and have oriented chromatic number at most $5$, although this does not have to hold for $\mathcal C$. In this sense, considering
classes defined by a forbidden minor does not seem to be here optimal.

It is possible, from a class $\mathcal C$, to construct the sequence (which we call {\em resolution} of $\mathcal C$) of the
classes of shallow minors of graphs in $\mathcal C$ at increasing depth
$$\mathcal C\subseteq\{G\shm 0: G\in\mathcal C\}\subseteq\dots\subseteq \{G\shm r: G\in\mathcal C\}\subseteq\dots$$
and we note that the results we obtained in this paper are expressed by means of the growth rate of the average degrees of the graphs
in these classes, as $r\rightarrow\infty$.

A related approach consists in considering the growth rate of the clique number of the graphs
in these classes, as $r\rightarrow\infty$. For a graph $G$ and an integer $r$ define
$$\omega_r(G)=\max\{\omega(H): H\in G\shm r\}.$$
Then we have the following connection between the grow rate of $\omega_r$ and the existence of
sublinear vertex separator in a class of graphs: 
Then the following holds
\begin{theorem}[\cite{Taxi_stoc06, Sparsity}]
\label{thm:sep}
Let $\mathcal{C}$ be a class of graphs such that
$$\lim_{r\rightarrow\infty}\frac{\log\sup_{G\in\mathcal C}\omega_r(G)}{r}=0.$$
Then the graphs of order $n$ in $\mathcal C$ have vertex separators of size $s(n) = o(n)$ which may be computed in time $O(ns(n)) = o(n^2)$.
\end{theorem}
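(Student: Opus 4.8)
The plan is to establish the contrapositive in quantitative form: if $\mathcal C$ fails to have sublinear separators, then $\limsup_{r\to\infty}\frac{\log\sup_{G\in\mathcal C}\omega_r(G)}{r}>0$, which contradicts the hypothesis. Concretely, failure of the separator property means there are a constant $\beta>0$ and arbitrarily large $n$ for which some $n$-vertex $G\in\mathcal C$ has no $\tfrac23$-balanced separator of order $<\beta n$. So I would fix a target depth $r$, pick such a $G$ with $n$ huge compared to $2^{2r}$, and aim to exhibit $K_{2^{r}}$ as a shallow minor of $G$ at depth $O_\beta(r)$; this yields $\omega_{O_\beta(r)}(G)\ge 2^{r}$ for arbitrarily large $r$, forcing the displayed limit to be positive and so giving the contradiction.

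It is worth recording at the outset that the hypothesis on $\omega_r$ already implies the same statement for $\rdens{r}$: every $H\in G\shm r$ is $K_{s+1}$-minor-free for $s=\omega_{2r+1}(G)$, so Kostochka--Thomason gives $\rdens{r}(G)=O(s\sqrt{\log s})$ and hence $\frac{\log\sup_{G}\rdens{r}(G)}{r}\to 0$ as well; this is reassuring, as it matches the known characterisation of polynomial expansion by strongly sublinear separators, but for the construction below I find it cleanest to produce the clique shallow minor directly.

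The heart of the matter is the implication: \emph{no $\tfrac23$-balanced separator of order $<\beta n$ $\Rightarrow$ a shallow minor of depth $O_\beta(r)$ isomorphic to $K_t$ with $\log t\ge\Omega_\beta(r)$}. I would run breadth-first search from an arbitrary root and use the elementary fact that any BFS layer whose prefix has between $\tfrac n3$ and $\tfrac{2n}3$ vertices is a $\tfrac23$-balanced separator, hence of order $\ge\beta n$; being pairwise disjoint there are at most $1/\beta$ such layers, so the search reaches its ``middle third'' within $1/\beta$ steps of first covering $\tfrac n3$ vertices. Combining this layer accounting with Menger's theorem (absence of small balanced separators yields a well-linked, indeed highly linked, set $W$ with $|W|\ge\gamma n$, $\gamma=\gamma(\beta)>0$) and peeling for $O(\log\tfrac1\beta)$ rounds, one obtains a subgraph in which a $\gamma n$-set is $(t^2)$-linked and, crucially, has radius $O_\beta(r)$ after contracting at most $2^{r}$ balls of radius $r$ — the dichotomy being that either $2^{r}$ pairwise-disjoint radius-$r$ balls fit, or fewer than $2^{r}$ of them already cover a constant fraction, pinning the diameter. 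Either way one has $2^{r}$ pairwise-disjoint connected sets of radius $O_\beta(r)$, and $(t^2)$-linkedness — available because $\gamma n\gg 2^{2r}$ — routes vertex-disjoint short paths realising all $\binom{2^{r}}{2}$ adjacencies, producing $K_{2^{r}}\in G\shm O_\beta(r)$. Sending $r\to\infty$ now contradicts the hypothesis.

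For the algorithmic conclusion one reads this backwards: the hypothesis supplies, for each $k$, a depth $r_k$ with $\sup_{G\in\mathcal C}\omega_{r_k}(G)\le 2^{r_k/k}$, so executing the BFS-layering/peeling procedure with parameter $r_k$ on an $n$-vertex $G\in\mathcal C$ \emph{cannot} reach the clique-minor outcome and must instead halt by exhibiting a $\tfrac23$-balanced separator of order $\le\varepsilon_k n$ with $\varepsilon_k\to 0$; recursing on the two sides builds a separator tree, and diagonalising over $k$ as $n$ grows yields $s(n)=o(n)$. Since densities in $\mathcal C$ are bounded (via $\omega_1$), each graph has $O(n)$ edges, each BFS/flow step costs $O(n)$, and the total is $O(n\,s(n))=o(n^2)$. \textbf{The main obstacle is the third paragraph}: turning a purely separation-theoretic hypothesis into a clique shallow minor whose \emph{order is exponential in a freely prescribed depth} $r$, which forces one to control simultaneously the radii of the branch sets and the number of disjoint paths available to join them — and this is exactly where the hypothesis on $\omega_r$ (equivalently on $\rdens r$), rather than mere boundedness of $\rdens r$, which bounded-degree expanders satisfy, is indispensable.
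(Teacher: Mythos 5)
The key step of your third paragraph is not just unproved, it is false as stated, and it is exactly the point you flag as ``the main obstacle''. You claim that a single $n$-vertex graph $G$ with no $\tfrac23$-balanced separator of order $<\beta n$ contains $K_{2^r}$ as a shallow minor at depth $O_\beta(r)$ for a \emph{freely prescribed} $r$, provided only that $n\gg 2^{2r}$. But the branch sets of a $K_t$ shallow minor at depth $d$ are pairwise adjacent and each has radius at most $d$, so their union lies in a single ball of radius at most $3d+1$, and the subgraph induced by that ball must have cycle rank at least $\binom{t}{2}-t+1$. Now take a bounded-degree expander family with girth $\Omega(\log n)$ (such families exist, and expansion gives a universal $\beta>0$ with no $\tfrac23$-balanced separator of order $<\beta n$): for fixed $r$ and $n$ large, every ball of radius $O_\beta(r)$ has a bounded number of vertices, eventually fewer than the girth, hence induces a forest and contains no $K_3$ minor at all, let alone $K_{2^r}$. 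Making $n$ ``huge compared to $2^{2r}$'' therefore works against you, not for you; locally such graphs are tree-like, and the clique shallow minors guaranteed by the absence of small separators only appear at depth comparable to $\log n$. The paper's own sharpness remark (the class of $k$-subdivisions of cubic graphs, with linear-size separators but $\log\omega_r/r$ arbitrarily small) points in the same direction: no argument can output a clique of order exponential in an arbitrary prescribed depth from a separation hypothesis alone, so the well-linkedness/peeling/ball-covering sketch cannot be completed in the form you describe, and the final contradiction (``sending $r\to\infty$'') does not get off the ground.

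For comparison, this theorem is quoted here from \cite{Taxi_stoc06,Sparsity} and not proved in the note; the proof given there rests on the theorem of Plotkin, Rao and Smith \cite{shallow}, which is the correct, depth-scaled version of what you are after: for parameters $l$ and $h$ one can, in time $O(n\|G\|/l)$, either exhibit a $K_h$ minor at depth $O(l\log n)$ or output a balanced separator of order $O(n/l+h^2l\log n)$. With $h=\sup_{G\in\mathcal C}\omega_{l\log n}(G)+1$ the first outcome is impossible, and the hypothesis $\log\sup_{G\in\mathcal C}\omega_r(G)/r\to 0$ allows one to choose $l=l(n)\to\infty$ slowly enough that $h^2l\log n=o(n/l)$, yielding separators of size $O(n/l(n))=o(n)$, computable in $O(n\|G\|/l)=O(n\,s(n))$ since $\|G\|=O(n)$ (bounded density via $\omega_1$, as you correctly note). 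Your contrapositive reading is also sound once the depth is allowed to scale: a sequence of graphs without sublinear separators produces depths $r_n=\Theta_\beta(\log n)\to\infty$ at which $\log\sup_{G\in\mathcal C}\omega_{r_n}(G)/r_n\ge c(\beta)>0$, contradicting the hypothesis. So the repair is to abandon the prescribed-depth clique extraction and either invoke \cite{shallow} or reprove a statement of that type, in which the depth of the minor found is $\Theta(\log n)$ and its order is polynomial in $n$.
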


Note that this threshold is sharp, in the sense that for every $c>0$ there is a class 
 $\mathcal C$  with
$$\lim_{r\rightarrow\infty}\frac{\log \sup_{G\in\mathcal C}\omega_r(G)}{r}\leq c$$ 
but no sublinear vertex separators 
(consider the class $\mathcal C$ of $k$-subdivisions of cubic graphs).

As $\omega_r(G)\leq 2\rdens{r}(G)+1$,
 the classes $\mathcal C$ with $\lim_{r\rightarrow\infty}(\log\Ex(r))/r=0$, which we considered in this note, satisfy the conditions of Theorem~\ref{thm:sep}.
 However, the condition on $\omega_r$ is weaker, as witnessed by classes of d-dimensional meshes with bounded aspect ratio (see~\cite{Miller1990} for a definition),
 that are such that $\sup_{G\in\mathcal C}\omega_r(G)$ grows polynomially with $r$ (see \cite{shallow}).
 
 We believe that the study of the growth rate of graph parameters on the resolution of a class can be useful to determine threshold where certain class properties stop 
 being true.

\begin{thebibliography}{10}

\bibitem{Bondy1990}
J.A. Bondy and P.~Hell, \emph{A note on the star chromatic number}, Journal of
  Graph Theory \textbf{14} (1990), no.~4, 479--482.

\bibitem{Borodin2007}
O.V. Borodin, A.O. Ivanova, and A.V. Kostochka, \emph{Oriented $5$-coloring of
  sparse plane graphs}, Journal of Applied and Industrial Mathematics
  \textbf{1} (2007), no.~1, 9--17.

\bibitem{Borodin2004}
O.V. Borodin, S.-J. Kim, A.V. Kostochka, and D.B. West, \emph{Homomorphisms
  from sparse graphs with large girth}, Journal of Combinatorial Theory, Series
  B \textbf{90} (2004), 147--159.

\bibitem{Diestel2004}
R.~Diestel and C.~Rempel, \emph{Dense minors in graphs of large girth},
  Combinatorica \textbf{25} (2004), 111--116.

\bibitem{Dvovrak2009}
Z.~Dvo{\v r}\'ak and S.~Norine, \emph{Small graph classes and bounded
  expansion}, Journal of Combinatorial Theory, Series B \textbf{100} (2010),
  no.~2, 171--175.

\bibitem{Galluccio2001}
A.~Galluccio, L.A. Goddyn, and P.~Hell, \emph{High-girth graphs avoiding a
  minor are nearly bipartite}, Journal of Combinatorial Theory, Series B
  \textbf{83} (2001), no.~1, 1--14.

\bibitem{Hatami2005}
H.~Hatami, \emph{Random cubic graphs are not homomorphic to the cycle of size
  $7$}, Journal of Combinatorial Theory, Series B \textbf{93} (2005), no.~2,
  319--325.

\bibitem{Miller1990}
G.~L. Miller and W.~Thurston, \emph{Separators in two and three dimensions},
  STOC '90: Proceedings of the twenty-second annual ACM symposium on Theory of
  computing (New York, NY, USA), ACM, 1990, pp.~300--309.

\bibitem{Taxi_stoc06}
J.~Ne{\v s}et{\v r}il and P.~Ossona~de Mendez, \emph{Linear time low tree-width
  partitions and algorithmic consequences}, STOC'06. Proceedings of the 38th
  Annual {ACM} Symposium on Theory of Computing, {ACM} Press, 2006,
  pp.~391--400.

\bibitem{Taxi_tdepth}
\bysame, \emph{Tree depth, subgraph coloring and homomorphism bounds}, European
  Journal of Combinatorics \textbf{27} (2006), no.~6, 1022--1041.

\bibitem{POMNI}
\bysame, \emph{Grad and classes with bounded expansion {I}. decompositions},
  European Journal of Combinatorics \textbf{29} (2008), no.~3, 760--776.

\bibitem{Sparsity}
\bysame, \emph{Sparsity (graphs, structures, and algorithms)}, Algorithms and
  Combinatorics, vol.~28, Springer, 2012, 465 pages.

\bibitem{Nevsetvril1997}
J.~Ne{\v s}et{\v r}il, A.~Raspaud, and E.~Sopena, \emph{Colorings and girth of
  oriented planar graphs}, Discrete Mathematics \textbf{165-166} (1997),
  519--530.

\bibitem{Nevsetvril2001}
J.~Ne\v{s}et\v{r}il and X.~Zhu, \emph{Construction of graphs with prescribed
  circular colorings}, Discrete Math. \textbf{233} (2001), no.~1-3, 277--291.

\bibitem{shallow}
S.~Plotkin, S.~Rao, and W.D. Smith, \emph{Shallow excluded minors and improved
  graph decomposition}, 5th Symp. Discrete Algorithms, SIAM, 1994,
  pp.~462--470.

\bibitem{raey}
X.~Zhu, \emph{Recent developments in circular colouring of graphs}, Topics in
  Discrete Mathematics (M.~Klazar, J.~Kratochv{\'\i}l, M.~Loebl, J.~Matou{\v
  s}ek, P.~Valtr, and R.~Thomas, eds.), Algorithms and Combinatorics, vol.~26,
  Springer Berlin Heidelberg, 2006, pp.~497--550.

\end{thebibliography}
\providecommand{\noopsort}[1]{}\providecommand{\noopsort}[1]{}
\providecommand{\bysame}{\leavevmode\hbox to3em{\hrulefill}\thinspace}
\providecommand{\MR}{\relax\ifhmode\unskip\space\fi MR }
\providecommand{\MRhref}[2]{%
  \href{http://www.ams.org/mathscinet-getitem?mr=#1}{#2}
}
\providecommand{\href}[2]{#2}

 \end{document}